\newtheorem{theorem}{Theorem}
\newtheorem{proposition}[theorem]{Proposition}
\newtheorem{remark}[theorem]{Remark}
\newcommand {\N}{{\bf N}}
\def\re{\mathbb{R}}
\def\N{\mathbb{N}}
\def\e{\varepsilon}
\def\pd{\partial}
\def\ol{\overline}
\def\la{\lambda}
\def\disp{\displaystyle}
\def\({\left(}
\def\){\right)}
\def\pd{\partial}
\def\intO{\int_{\Omega}}
\def\phia{\phi_{\alpha}}
\begin{document}
\title{Hardy's inequality in a limiting case on general bounded domains}

\author{Jaeyoung Byeon \and Futoshi Takahashi}

\address{%
Department of Mathematical Sciences, KAIST \\
291 Daehak-ro, Yuseong-gu,  Daejeon 34141,
Republic of Korea \\
and \\
Department of Mathematics, Osaka City University \\
3-3-138 Sugimoto, Sumiyoshi-ku, Osaka 558-8585, Japan}

\email{byeon@kaist.ac.kr \\ futoshi@sci.osaka-cu.ac.jp}

\begin{abstract}
In this paper, we study Hardy's inequality in a limiting case:
$\disp{\intO |\nabla u |^N dx \ge C_N(\Omega) \intO \frac{|u(x)|^N}{|x|^N \( \log \frac{R}{|x|} \)^N} dx}$
for functions $u \in W^{1,N}_0(\Omega)$, where $\Omega$ is a bounded domain in $\re^N$ with $R = \sup_{x \in \Omega} |x|$.
We study the attainability of the best constant $C_N(\Omega)$ in several cases.
We provide sufficient conditions that assure $C_N(\Omega) > C_N(B_R)$ and $C_N(\Omega)$ is attained, here $B_R$ is the $N$-dimensional ball with center the origin and radius $R$.
Also we provide an example of $\Omega \subset \re^2$ such that $C_2(\Omega) > C_2(B_R) = 1/4$ and $C_2(\Omega)$ is not attained.
\end{abstract}

\subjclass[2010]{Primary 35A23; Secondary 26D10.}

\keywords{Hardy's inequality in a limiting case, best constants.}
\date{\today}

\dedicatory{}

\maketitle

\section{Introduction}

The classical Hardy inequality in one space dimension states that
\begin{equation}
\label{Hardy_1D}
	\int_0^{\infty} |u'(t)|^p \, dt \ge \( \frac{p-1}{p} \)^p \int_0^{\infty} \frac{|u(t)|^p}{t^p} \, dt
\end{equation}
holds for all $u \in W^{1,p}_0(0, +\infty)$ where $1 < p < \infty$.
This scaling invariant inequality is now  very classical and there are  wonderful treatises  \cite{Ghoussoub-Moradifam(book)}, \cite{Mazya}, \cite{Opic-Kufner} on further generalizations of the inequality \eqref{Hardy_1D}.
It is also known that the constant $\( \frac{p-1}{p} \)^p$ is best possible and it is not achieved by any function in $W^{1,p}_0(0,+\infty)$.
The inequality \eqref{Hardy_1D} has been generalized to higher dimensions in two directions:
one is to replace the function $t$ in the right-hand side by the distance to the origin,
and the other is to replace it by the distance to the boundary.

For the former direction, let $\Omega$ be a domain with $0 \in \Omega$ in $\re^N$ ($N \ge 2$) and let $p \geq 1$.
Then the classical $L^p$-Hardy inequality states that
\begin{equation}
\label{H_p}
	\intO |\nabla u|^p \, dx \ge \left| \frac{N-p}{p} \right|^p \intO \frac{|u|^p}{|x|^p} \, dx
\end{equation}
holds for all $u \in W^{1,p}_0(\Omega)$ when $1 \le p < N$, and
for all $u \in W^{1,p}_0(\Omega \setminus \{ 0 \})$ when $p > N$.
It is known that for $p > 1$, the best constant $|\frac{N-p}{p}|^p$ is never attained in $W^{1,p}_0(\Omega)$ when $p < N$, or in $W^{1,p}_0(\Omega \setminus \{ 0 \})$ when $p > N$, respectively.
After the pioneering work of Brezis and V\'{a}zquez \cite{Brezis-Vazquez}, which showed that the inequality can be improved on bounded domains when $p < N$,
there are many papers that treat the improvements of the inequality (\ref{H_p})
(see \cite{ACR}, \cite{BFT1}, \cite{BFT2}, \cite{Cazacu}, \cite{DPP}, \cite{Filippas-Tertikas}, \cite{GGM}, \cite{Sano-TF},
the recent book \cite{Ghoussoub-Moradifam(book)}  and the reference therein.)

For the latter direction,
let $\Omega \subset \re^N$ be an open set with Lipschitz boundary and define $d(x) = {\rm dist}(x, \pd\Omega)$.
Then, a version of Hardy inequalities, called ``geometric type", states that for any $p > 1$,
there exists $c_p(\Omega) > 0$ such that the inequality
\begin{equation}
\label{GH_p}
	\intO |\nabla u|^p \, dx \ge c_p(\Omega) \intO \frac{|u|^p}{(d(x))^p} \, dx
\end{equation}
holds for all $u \in W^{1,p}_0(\Omega)$. 
For this inequality, refer to \cite{Ancona}, \cite{BFT1}, \cite{Brezis-Marcus}, \cite{DPP}, \cite{LP}, \cite{Lehrback}, \cite{MS(NA)}, \cite{Tidblom(JFA)}, \cite{Tidblom(PAMS)}, 
the recent book \cite{BEL(book)} and the references therein.
In \cite{MS(NA)}, it is proved that $c_p(\Omega) = \( \frac{p-1}{p} \)^p$ is the best constant on any convex domain $\Omega$,
that is,
\begin{equation}
\label{hq}
	c_p(\Omega) = \inf_{u \in W^{1,p}_0(\Omega), u \not\equiv 0}
	\frac{\intO |\nabla u |^p dx}{\intO \frac{|u(x)|^p}{(d(x))^p} dx} = \( \frac{p-1}{p} \)^p.
\end{equation}
In \cite{BFT1}, \cite{Tidblom(PAMS)}, the authors obtained an additional extra term on the right-hand side of (\ref{GH_p}),
which means that the best constant $c_p(\Omega)$ is never attained on any convex domain $\Omega$.
When $\Omega$ is the half-space $\re^N_{+} = \{ x = (x_1, \cdots, x_N) \,|\, x_N > 0 \}$, the inequality (\ref{GH_p}) has the form
\begin{equation}
\label{Hardy_half}
	\int_{\re^N_{+}} |\nabla u|^p \, dx \ge \( \frac{p-1}{p} \)^p \int_{\re^N_{+}} \frac{|u|^p}{x_N^p} \, dx
\end{equation}
and the best constant $\( \frac{p-1}{p} \)^p$ is never attained by functions in $W^{1,p}_0(\re^N_{+})$.
On the other hand, let $\Omega$ be a bounded domain with $C^{1, \gamma}$ boundary for some $\gamma \in (0,1)$.
Then it is proved by Marcus, Mizel, and Pinchover in \cite{MMP} that
there exists a minimizer of $C_2(\Omega)$ if and only if $C_2(\Omega) < 1/4$.
See also \cite{MMP}, \cite{Marcus-Shafrir}, \cite{LP} for the corresponding results for $1 < p < \infty$. 
So the compactness of any minimizing sequence fails only at the
bottom level $\( \frac{p-1}{p} \)^p.$

%
%
In the critical case $p = N$, the weight $|x|^{-N}$ is too singular for the same type of inequality as (\ref{H_p}) to hold true for functions in $W^{1,N}_0(\Omega)$.
Instead of (\ref{H_p}), it is known that the following {\it Hardy inequality in a limiting case}
\begin{equation}
\label{Hardy_N}
	\intO |\nabla u |^N dx \ge \( \frac{N-1}{N} \)^N \intO \frac{|u(x)|^N}{|x|^N \( \log \frac{R}{|x|} \)^N} dx
\end{equation}
holds true for all $u \in W^{1,N}_0(\Omega)$ where $R = \sup_{x \in \Omega} |x|$;
refer to \cite{Leray}, \cite{Ladyzhenskaya}, \cite{DP}, \cite{Ioku-Ishiwata}, \cite{TF} and references therein.
Note that the additional $\log$ term weakens the singularity of $|x|^{-N}$ at the origin,
however, the weight function
\[
	W_R(x) = \frac{1}{|x|^N \( \log \frac{R}{|x|} \)^N}
\]
becomes singular also on the boundary $\pd\Omega$ since $R = \sup_{x \in \Omega} |x|$.
Indeed, since
\begin{equation}
\label{Taylor}
	|x|^N \( \log \frac{R}{|x|} \)^N = (R-|x|)^N + o((R-|x|)^N)
\end{equation}
as $|x| \to R$, $W_R$ has a similar effect of $(1/d(x))^N$ near the boundary.
In this sense, the critical Hardy inequality (\ref{Hardy_N}) has both features of the inequalities \eqref{H_p} and \eqref{GH_p}.
Note that (\ref{Hardy_N}) is invariant under the scaling
\begin{equation}
\label{scaling_N}
	u_{\la}(x) = \la^{-\frac{N-1}{N}} u\( \( \frac{|x|}{R} \)^{\la-1} x \) \quad \text{for} \, \la > 0,
\end{equation}
which is different from the usual scaling $u_{\la}(x) = \la^{\frac{N-p}{p}} u(\la x)$ for (\ref{H_p}) when $\Omega = \re^N$ and $p < N$.
(However recently, a relation of both scaling transformations is obtained, see \cite{Sano-TF}).

%
%
Let $C_N(\Omega)$ be the best constant of the inequality (\ref{Hardy_N}):
\begin{equation}
\label{CHN}
	C_N(\Omega) = \inf_{u \in W^{1,N}_0(\Omega), u \not\equiv 0}
	\frac{\intO |\nabla u |^N dx}{\intO \frac{|u(x)|^N}{|x|^N \( \log \frac{R}{|x|} \)^N} dx}.
\end{equation}
By this definition and \eqref{Hardy_N}, we see $C_N(\Omega) \ge \( \frac{N-1}{N} \)^N$ for any bounded domain $\Omega \subset B_R$ with $R = \sup_{x \in \Omega} |x|$.
Here and henceforth, $B_R$ will denote the $N$-dimensional ball with radius $R$ and center $0$.

In \cite{Ioku-Ishiwata}, the authors proved that $C_N(B_R) = \( \frac{N-1}{N} \)^N$ and $C_N(B_R)$ is never attained by any function in $W^{1,N}_0(B_R)$.
See also \cite{DFP}, \cite{DP}.
Let us recall the arguments in \cite{Ioku-Ishiwata}.
First, the authors of \cite{Ioku-Ishiwata} prove that, if the infimum $C_N(B_R)$ is attained by a radially symmetric function $u \in W^{1,N}_{0, rad}(B_R)$,
then $u \in C^1(B_R \setminus \{ 0 \})$, $u > 0$ and $u$ is unique up to multiplication of positive constants.
By using these facts and the scaling invariance (\ref{scaling_N}), the authors prove that $C_N(B_R)$ is not attained by radially symmetric functions.
Indeed, by the scaling invariance (\ref{scaling_N}) and the uniqueness up to multiplication of positive constants,
the possible radially symmetric minimizer has the form $C (\log \frac{R}{|x|})^{\frac{N-1}{N}}$ which is not in $W^{1,N}_0(B_R)$.
Finally, they prove that if there exists a minimizer of $C_N(B_R)$, then there exists also a radially symmetric minimizer.
The argument of this part is elementary and the proof of the non-attainability of $C_N(B_R)$ is established.

The main purpose of this paper is to study the (non-)attainability of the infimum $C_N(\Omega)$ for more general domains $\Omega \subset B_R$. 
Some new phenomena will be shown in this paper.
We first note that if $C_N(\Omega) = \( \frac{N-1}{N} \)^N$,  $C_N(\Omega)$ is not attained.
In fact, if $C_N(\Omega)$ is attained by an element $u \in W_0^{1,N}(\Omega),$ by a trivial extension of $u$ as an element in $W_0^{1,N}(B_R),$
$C_N(B_R) = \( \frac{N-1}{N} \)^N $ is attained  by $u$; this contradicts the result in \cite{Ioku-Ishiwata} that $ C_N(B_R) $ is not attained.
In the following, we may not impose the assumption that $0 \in \Omega$.
Since the weight function $W_R(x) = (|x| (\log \frac{R}{|x|} ))^{-N}$ itself depends on the geometric quantity $R$,
it is not clear whether $C_N(\Omega)$ has the same value as $C_N(B_R)$ for all domains $\Omega \subset B_R$ or not.
Since $W_R$ becomes unbounded around the origin and also around the set $|x| = R$,
it is plausible that minimizing sequences for $C_N(\Omega)$ tend to concentrate on the origin or on the boundary portion $\pd \Omega \cap \pd B_R$
in order to minimize the quotient
\[
	Q_R(u) = \frac{\intO |\nabla u |^N dx}{\intO W_R(x) |u(x)|^N dx}.
\]
This will result in that $C_N(\Omega) = C_N(B_R)$ and $C_N(\Omega)$ is not attained,
if the origin is the interior point of $\Omega$, or $\Omega$ has a smooth boundary portion at a distance $R$ to the origin
(just like a ball $B_R$).
We will prove later that these intuitions are true, see Theorem \ref{theorem-origin} and Theorem \ref{theorem-smooth}.
However, when we treat a domain $\Omega \subset B_R$ with $R = \sup_{x \in \Omega} |x|$,
which does not contain the origin in its interior, nor have the smooth boundary portion $\pd\Omega \cap B_R$,
the situation is rather different.
Actually, we provide a sufficient condition on $\Omega \subset B_R$ which assures that $C_N(\Omega) > C_N(B_R)$ (Theorem \ref{theorem-inequality}).
Moreover, we prove that a stronger condition on $\Omega$ than the sufficient condition assures  that $C_N(\Omega)$ is attained (Theorem \ref{theorem-existence}).
Finally, we provide an example of domain in $\re^2$ on which $C_2(\Omega) > C_2(B_R) = 1/4$ and $C_2(\Omega)$ is not attained  (Theorem \ref{theorem-nonexistence}).
This is quite a contrast to the result for \eqref{hq} in \cite{MMP}, which says that
if $c_2(\Omega)$ is strictly less than the critical number $\frac 14,$ the infimum $c_2(\Omega)$ is attained.

The organization of this paper is as follows:
In \S 2, we prove Theorem \ref{theorem-origin}, which says that if $0 \in \Omega$, then
$C_N(\Omega) =\( \frac{N-1}{N} \)^N$ and the infimum is not attained.
In \S 3, we prove Theorem \ref{theorem-smooth}, which says that if $\partial B_R \cap \partial \Omega$
enjoys some regularity, then $C_N(\Omega) =\( \frac{N-1}{N} \)^N$ and the infimum is not attained.
In \S 4, we prove Theorem \ref{theorem-inequality}, which says that a strict inequality
$C_N(\Omega) > \( \frac{N-1}{N} \)^N$ holds under some condition on $\Omega$
and Theorem \ref{theorem-existence}, which says that under a stronger condition than the one in Theorem \ref{theorem-inequality}, the infimum is attained.
Finally in \S 6, we prove Theorem \ref{theorem-nonexistence}, which says that the condition for the existence of a minimizer in Theorem \ref{theorem-existence} is optimal.

Now, we fix some notations and usages.
For a bounded domain $\Omega \subset \re^N$,
the letter $R$ will be used to denote $R = \sup_{x \in \Omega} |x|$ throughout the paper.
$B_R$ will denote the $N$-dimensional ball with radius $R$ and center $0$.
The surface area $\int_{S^{N-1}} dS_{\omega}$ of the $(N-1)$ dimensional unit sphere $S^{N-1}$ in $\re^N$ will be denoted by $\omega_{N-1}$.
$S^{N-1}(r)$ will denote the sphere of radius $r$ with center $0$.
Finally, the letter $C$ may vary from line to line.

%
%
\begin{section}{Hardy's inequality  for the case $0  \in \Omega$}

In this section, we treat the case when $\Omega \subset B_R$ has the origin as an interior point of $\Omega$.
In this case, we prove the following theorem.

\begin{theorem}
\label{theorem-origin}
For any bounded domain $\Omega \subset \re^N$ with $0 \in \Omega$ and $R = \sup_{x \in \Omega} |x|$,
\begin{align*}
	C_N(\Omega) = C_N(B_R) = \( \frac{N-1}{N} \)^N,
\end{align*}
and the infimum $C_N(\Omega)$ is not attained.
\end{theorem}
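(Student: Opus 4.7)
The plan is to establish $C_N(\Omega) \le C_N(B_R) = \bigl(\tfrac{N-1}{N}\bigr)^N$ and $C_N(\Omega) \ge C_N(B_R)$ separately, and then to obtain non-attainment by the zero-extension argument already outlined in the introduction. The lower bound is immediate: since $\Omega \subset B_R$, zero extension gives an inclusion $W^{1,N}_0(\Omega) \hookrightarrow W^{1,N}_0(B_R)$ that preserves the Rayleigh quotient $Q_R$, so the infimum over the smaller class dominates that over the larger one.

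The upper bound is the heart of the matter; here the hypothesis $0 \in \Omega$ is essential. My strategy is to exploit the scale invariance \eqref{scaling_N} of $Q_R$ to compress minimizing sequences for $C_N(B_R)$ into a small ball $B_{r_0}(0) \subset \Omega$ around the origin. Fix $r_0 > 0$ with $\overline{B_{r_0}(0)} \subset \Omega$ and take a minimizing sequence $\{u_n\} \subset C_c^\infty(B_R)$ for $C_N(B_R)$; each $u_n$ is supported in some $\overline{B_{r_n}(0)}$ with $r_n < R$. A short computation with the change of variables $y = (|x|/R)^{\lambda-1} x$, whose modulus is $|y| = R(|x|/R)^\lambda$, shows that the rescaled function $(u_n)_\lambda$ defined by \eqref{scaling_N} is supported in $\overline{B_{R(r_n/R)^{1/\lambda}}(0)}$. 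Choosing $\lambda_n = \min\{1,\, \log(r_n/R)/\log(r_0/R)\} \in (0,1]$ forces this support inside $\overline{B_{r_0}(0)}$, so $(u_n)_{\lambda_n} \in W^{1,N}_0(\Omega)$; the scale invariance then yields $Q_R((u_n)_{\lambda_n}) = Q_R(u_n) \to \bigl(\tfrac{N-1}{N}\bigr)^N$. Non-attainment is then immediate from the introductory observation: a minimizer in $W^{1,N}_0(\Omega)$ would extend by zero to a minimizer of $C_N(B_R)$, contradicting Ioku--Ishiwata.

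I do not anticipate a serious technical obstacle. The scale invariance is already recorded in the paper, and the remaining ingredient is only the elementary support computation above. The main conceptual observation is simply that, because $0$ lies in the interior of $\Omega$, the scale-invariant concentration at the origin along any minimizing sequence for $C_N(B_R)$ can be reproduced entirely inside $\Omega$, so the obstruction to attainment already present for the ball is inherited by $\Omega$.
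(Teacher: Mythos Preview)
Your argument is sound in outline but leans on a property of the scaling \eqref{scaling_N} that needs a caveat: that transformation preserves $Q_R$ only for \emph{radial} functions. Writing $x=r\omega$, $y=s\omega$ with $s=R(r/R)^\lambda$, a direct computation gives
\[
\int_{B_R}|\nabla u_\lambda|^N\,dx=\int_0^R\!\!\int_{S^{N-1}}\Bigl[s^2|\partial_s u|^2+\lambda^{-2}|\nabla_{S^{N-1}}u|^2\Bigr]^{N/2}s^{-1}\,dS_\omega\,ds,
\]
which matches $\int_{B_R}|\nabla u|^N\,dy$ only when the angular gradient vanishes. In the compression regime $\lambda<1$ you need, the Dirichlet integral can strictly increase on nonradial functions, so $Q_R((u_n)_{\lambda_n})=Q_R(u_n)$ is not guaranteed in general. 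The fix is immediate: take the minimizing sequence $\{u_n\}$ for $C_N(B_R)$ to be radial---this is legitimate since the Ioku--Ishiwata sequence $\psi_\beta$ already is---and then your compression argument goes through verbatim.

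With that adjustment your route is correct and genuinely different from the paper's. The paper writes down explicit radial test functions $\phi_\alpha(x)=(\log(R/|x|))^\alpha$, cuts them off to have support in a small ball $B_{cR}\subset\Omega$, and computes the quotient directly to obtain $Q_R(\phi_\alpha)\to((N-1)/N)^N$ as $\alpha\uparrow(N-1)/N$. Your approach trades that explicit calculation for the scale invariance: more conceptual, and it makes transparent why the hypothesis $0\in\Omega$ is exactly what allows the concentration mechanism on $B_R$ to be replicated inside $\Omega$. The paper's version, on the other hand, is self-contained and sidesteps the need to verify the invariance (with its radial restriction). The lower bound and the non-attainment step are handled identically in both proofs.
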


\begin{proof}
Note that by the definition of $R$, we have $\Omega \subset B_R$.
By a trivial extension of a function $u \in W^{1,N}_0(\Omega)$ on $B_R$ by $u(x) = 0$ for $x \in B_R \setminus \Omega$,
we see $W^{1,N}_0(\Omega) \subset W^{1,N}_0(B_R)$ and thus
\begin{equation}	
\label{C_N_lower}
	C_N(\Omega) \ge C_N(B_R) = \( \frac{N-1}{N} \)^N.
\end{equation}	
For the fact $C_N(B_R) = \( \frac{N-1}{N} \)^N$, we refer to \cite{Ioku-Ishiwata}.
In \cite{Ioku-Ishiwata}, the authors prove this fact by using the test functions
\begin{align*}
	\psi_{\beta} (x) = \begin{cases}
				1, &\quad 0 \le |x| \le \frac{R}{e}, \\
				\( \log \frac{R}{|x|} \)^\beta, &\quad \frac{R}{e} \le |x| \le R
			\end{cases}
\end{align*}
for $\beta > \frac{N-1}{N}$.
Note that $\{ \psi_{\beta} \}$ will concentrate on the boundary $\pd B_R$ when $\beta \downarrow \frac{N-1}{N}$.
In our case, since $0 \in \Omega$ is an interior point, there exists a small $c \in (0,1)$ such that $B_{cR}(0) \subset \Omega$.
For $0 < \alpha < \frac{N-1}{N}$, we define a function
\begin{align*}
	\phia (x) = \begin{cases}
				\( \log \frac{R}{|x|} \)^{\alpha}, &\quad |x| \le \frac{cR}{2}, \\
				\( \log \frac{2R}{c} \)^{\alpha}(2-\frac{2|x|}{cR}) , &\quad \frac{cR}{2} \le |x| \le  cR, \\
				0, &\quad  cR \le |x|, \, \text{and} \; x \in \Omega.
			\end{cases}
\end{align*}
Then we see that
\begin{align*}
	A \equiv &\intO |\nabla \phia|^N dx
	= \omega_{N-1} \int_0^{\frac{cR}{2}} \left| \alpha \( \log \frac{R}{r} \)^{\alpha-1} \( \frac{-1}{r} \) \right|^N r^{N-1} dr + O(1) \\
	&= \omega_{N-1} \alpha^N \int_0^{\frac{cR}{2}} \( \log \frac{R}{r} \)^{N(\alpha-1)} \frac{1}{r} \;  dr + O(1) \\
	&= \omega_{N-1} \alpha^N \left[ \frac{-1}{N(\alpha-1) + 1} \( \log \frac{R}{r} \)^{N(\alpha-1) + 1} \right]_0^{\frac{cR}{2}}  + O(1) \\
	&= \omega_{N-1} \alpha^N \( \frac{-1}{N(\alpha-1) + 1} \) \log \frac{2}{c} + O(1).
\end{align*}
Since $\alpha < \frac{N-1}{N}$, we have $N(\alpha-1) + 1 < 0$.
Thus $|\nabla \phia|^N$ is integrable near the origin and $\phia \in W^{1,N}_0(\Omega)$ for any $\alpha \in (0, \frac{N-1}{N})$.
Also we see that
 \begin{align*}
	B \equiv  &\intO \frac{|\phia(x)|^N}{|x|^N \( \log \frac{R}{|x|} \)^N} dx
	= \omega_{N-1} \int_0^{\frac{cR}{2}} \frac{(\log \frac{R}{r})^{\alpha N}}{r^N (\log \frac{R}{r})^N} r^{N-1} dr + O(1) \\
	&= \omega_{N-1} \int_0^{\frac{cR}{2}} \( \log \frac{R}{r} \)^{N\alpha - N} \frac{1}{r} \; dr + O(1) \\
	&= \omega_{N-1} \( \frac{-1}{N(\alpha-1) + 1} \) \log \frac{2}{c} + O(1).
\end{align*}
Therefore, we conclude that
\begin{align*}
	\frac{A}{B} &= \frac{\omega_{N-1} \alpha^N \( \frac{-1}{N(\alpha-1) + 1} \) \log \frac{2}{c} + O(1)}{\omega_{N-1} \( \frac{-1}{N(\alpha-1) + 1} \) \log \frac{2}{c} + O(1)}
	= \frac{\alpha^N  + O(1) (N(\alpha-1) + 1)}{1  + O(1) (N(\alpha-1) + 1)} \\
	&\to  \( \frac{N-1}{N} \)^N \quad \text{as} \; \alpha \uparrow \frac{N-1}{N}.
\end{align*}
This proves that
\[
	C_N(\Omega) = \( \frac{N-1}{N} \)^N,
\]
thus  the infimum $C_N(\Omega)$ is not attained; see Introduction.
\end{proof}
\end{section}

%
%
\begin{section}{Hardy's inequality  for smooth domains}

In this section, we prove that $C_N(\Omega)$ equals to $\( \frac{N-1}{N} \)^N$ if the domain has a smooth boundary portion on $\pd B_R$.
For the smoothness on the boundary, the interior sphere condition is enough to obtain the result.
Here we say that a point $x_0 \in \pd\Omega \cap \pd B_R$ satisfies an {\it interior sphere condition} if there is an open ball $B \subset \Omega$
such that $x_0 \in \pd B$.
The idea here is to construct a (non-convergent) minimizing sequence $\{ u_n \}$ for $C_N(\Omega)$ for which the value of $Q_R(u_n)$ goes to $\( \frac{N-1}{N} \)^N$,
by modifying a minimizing sequence for the best constant of Hardy's inequality on the half-space \eqref{Hardy_half} when $p = N$:
\begin{equation}
\label{Hardy_half_inf}
	\inf_{u \in C_0^\infty(\re^N_{+}) \setminus \{0\}} \frac{\int_{\re^N_{+}} |\nabla u|^N dx}{\int_{\re^N_{+}} |\frac{u}{x_N}|^N dx} = \( \frac{N-1}{N} \)^N.
\end{equation}
This is possible since the weight function $W_R(x)$ can be considered as $(1/d(x))^N$ near the smooth boundary portion $\pd\Omega \cap \pd B_R$.

\begin{theorem}
\label{theorem-smooth}
For a bounded domain $\Omega$, we assume that there exists a point $x_0 \in \pd\Omega \cap \pd B_R$ satisfying an interior sphere condition.
Then
\[
	C_N(\Omega) = \( \frac{N-1}{N} \)^N
\]
and the infimum $C_N(\Omega)$ is not attained.
\end{theorem}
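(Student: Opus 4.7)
I already have $C_N(\Omega)\ge ((N-1)/N)^N$ from the trivial extension $W^{1,N}_0(\Omega)\hookrightarrow W^{1,N}_0(B_R)$, and non-attainment then follows from the same extension combined with the Ioku--Ishiwata non-attainment on $B_R$, exactly as recorded in the introduction. Everything thus reduces to producing test functions $u_\lambda\in W^{1,N}_0(\Omega)$ for which $Q_R(u_\lambda)\to ((N-1)/N)^N$. Following the hint before the statement, my plan is to transplant minimizing sequences of the half-space Hardy inequality \eqref{Hardy_half_inf} into a neighborhood of $x_0$: the interior sphere condition keeps the transplanted supports inside $\Omega$, while the Taylor expansion \eqref{Taylor} identifies $W_R(x)$ with the half-space weight $y_N^{-N}$ in the limit.

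After a rotation I may take $x_0=Re_N$, so the interior ball is $B=B_\rho(y_0)$ with $y_0=(R-\rho)e_N$. Given $\epsilon>0$, using \eqref{Hardy_half_inf} pick $v\in C_c^\infty(\re^N_+)$ supported in $\{|y'|\le 1,\,a\le y_N\le 1\}$ for some $a\in(0,1)$, with
\[
\frac{\int_{\re^N_+}|\nabla v|^N\,dy}{\int_{\re^N_+}|v|^N y_N^{-N}\,dy}<\left(\frac{N-1}{N}\right)^N+\epsilon,
\]
and for $\lambda\gg 1$ put $u_\lambda(x):=v\bigl(\lambda x',\,\lambda(R-|x|)\bigr)$. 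The defining condition $x\in B$ is equivalent to $|x'|^2<(R-x_N)(2\rho-(R-x_N))$, which combined with the support bounds $|x'|\le 1/\lambda$ and $a/\lambda\le R-|x|\le 1/\lambda$ on $\mathrm{supp}(u_\lambda)$ gives $\mathrm{supp}(u_\lambda)\subset B\subset\Omega$ for all sufficiently large $\lambda$; hence $u_\lambda\in W^{1,N}_0(\Omega)$.

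A direct differentiation of the composition yields $\partial_j u_\lambda=\lambda\partial_j v-\lambda(x_j/|x|)\partial_N v$ for $j<N$ and $\partial_N u_\lambda=-\lambda(x_N/|x|)\partial_N v$; after expansion and use of $|x'|^2+x_N^2=|x|^2$,
\[
|\nabla u_\lambda|^2=\lambda^2|\nabla v|^2-2\lambda^2\partial_N v\sum_{j<N}(x_j/|x|)\partial_j v=\lambda^2|\nabla v|^2(1+o(1))
\]
uniformly on $\mathrm{supp}(u_\lambda)$ as $\lambda\to\infty$. The change of variables $y=\bigl(\lambda x',\lambda(R-|x|)\bigr)$ has Jacobian $\lambda^N x_N/|x|=\lambda^N(1+o(1))$, and \eqref{Taylor} gives $|x|^N(\log(R/|x|))^N=(y_N/\lambda)^N(1+o(1))$. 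Combining these three asymptotics,
\[
\lim_{\lambda\to\infty}Q_R(u_\lambda)=\frac{\int_{\re^N_+}|\nabla v|^N\,dy}{\int_{\re^N_+}|v|^N y_N^{-N}\,dy}<\left(\frac{N-1}{N}\right)^N+\epsilon,
\]
so $C_N(\Omega)\le ((N-1)/N)^N+\epsilon$; letting $\epsilon\downarrow 0$ completes the proof.

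The only real difficulty is bookkeeping: one must check that the three $o(1)$ corrections above -- the Jacobian deviation $x_N/|x|-1$, the off-diagonal gradient cross terms $x_j/|x|$ for $j<N$, and the Taylor remainder in \eqref{Taylor} -- are uniformly small on the shrinking support of $u_\lambda$. Since $|x-x_0|=O(1/\lambda)$ there, each estimate is routine, but the three must be orchestrated carefully so that they survive both the $L^N$ integration and the final ratio.
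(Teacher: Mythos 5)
Your proposal is correct and follows essentially the same strategy as the paper's proof: transplant a near-minimizing sequence for the half-space Hardy inequality \eqref{Hardy_half_inf} into $\Omega$ near the interior-sphere point $x_0$, shrink its support, and use \eqref{Taylor} to identify the weight $W_R$ with the half-space weight in the limit. The only difference is the transplantation map itself --- the paper uses a plain translation $u_\varepsilon^l(x)=v_\varepsilon^l(x+Re_N)$ of test functions with paraboloid-shaped supports $\{|x'|^2<Ax_N/l\}$ (so no Jacobian or gradient distortion arises and the weight is compared with $(x_N+R)^{-N}$), whereas you use the curvilinear map $x\mapsto(\lambda x',\lambda(R-|x|))$ and control the resulting $o(1)$ Jacobian, cross-term, and Taylor corrections by hand; both routes work.
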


\begin{proof}
The following proof is inspired by \cite{MMP}.
We write $x = (x_1, \cdots, x_{N-1}, x_N) = (x^{\prime}, x_N)$ for $x \in \re^N_{+}$.
Fix $\e > 0$ arbitrary.
By \eqref{Hardy_half_inf}, we may take $v_\e \in C_0^\infty(\re^N_+)$ such that
\[
	\int_{\re^N_+} \left|\frac{v_\e}{x_N} \right|^N dx = 1, \quad \text{and} \quad  \int_{\re^N_+} |\nabla v_\e|^N dx \le \(\frac{N-1}{N} \)^N + \e.
\]
Since $\textrm{supp}(v_\e)$ is compact, we may assume that
\[
	\textrm{supp}(v_\e) \subset \{x = (x^{\prime}, x_N) \in \re^N_+ \ | \  |x^{\prime}|^2 <  A x_N, \  x_N < B \}
\]
if we take $A,B > 0$ sufficiently large depending on $\e$.
We think $v_{\e}$ is $0$ outside of its support and is defined on the whole $\re^N_{+}$.
For $l \in \N$, we define $v_\e^l(x) = v_\e(l x)$.
Note that for each $l > 0$, we have
\[
	\int_{\re^N_+} |\nabla v^l_\e|^N dx = \int_{\re^N_+} |\nabla v_\e|^N dx, \quad  \int_{\re^N_+} \left|\frac{v^l_\e}{x_N} \right|^N dx = \int_{\re^N_+} \left|\frac{v_\e}{x_N} \right|^N dx
\]
and
\[
	\textrm{supp}(v^l_\e) \subset \left\{(x^{\prime}, x_N) \in \re^N_+ \ | \  |x^{\prime}|^2 <  \frac{A}{l} x_N, \  x_N < \frac{B}{l} \right\}.
\]
By a rotation, we may assume that $x_0 = (-R) e_N \in \partial \Omega \cap \pd B_R$ satisfies an interior sphere condition,
where $e_N = (0, \cdots, 0, 1)$.
Then we see that for some $A^{\prime}$, $B^{\prime} > 0$,
\[
	\{(x^{\prime}, x_N) \in \re^N_+ \ | \  |x^{\prime}|^2 <  A^{\prime}  x_N, \  x_N < B^{\prime} \} \subset \Omega + R e_N
\]
Since \eqref{Taylor} holds for small $R - |x|$, we see that
\begin{equation}
\label{S1}
	|x|^N \( \log\frac{R}{|x|} \)^N \le (x_N + R)^N + o((x_N +R)^N)
\end{equation}
for $x \in \Omega$ with small $x_N+R$.
Now we define
\[
	u_\e^l(x) \equiv v_\e^l(x + R e_N)
\]
for $x \in \Omega$.
Then, for large $l > 0$, we see that
$u_\e^l \in C_0^\infty(\Omega)$ and
\[
	{\rm supp}(u_\e^l) \subset \Omega \cap \{x \in B_R \ | \ x_N+R < B/l\}.
\]
Now \eqref{S1} implies that
\begin{align*}
	\intO \frac{|u_\e^l(x)|^N}{|x|^N\big (\log\frac{R}{|x|}\big )^N} dx \geq \intO \frac{|u_\e^l(x)|^N}{(x_N + R)^N} dx + o_l(1) = \int_{\Omega + Re_N} \frac{|v_\e^l(y)|^N}{|y_N|^N} dy +o_l(1)
\end{align*}
where $o_l(1) \to 0$ as  $l \to \infty$,
and
\begin{align*}
	\intO \big |\nabla u_{\e}^l(x) \big|^N dx = \int_{\Omega + Re_N} \big |\nabla v_{\e}^l(y) \big|^N dy \leq \int_{\re^N_{+}} \big |\nabla v_{\e}^l(y) \big|^N dy.
\end{align*}
Thus we have
\begin{align*}
	\frac{\intO \big |\nabla u_{\e}^l(x) \big|^N dx}{\intO \frac{|u_\e^l(x)|^N}{|x|^N\big (\log\frac{R}{|x|}\big )^N} dx}
	\le
	\frac{\int_{\re^N_+} \big |\nabla v_\e^l\big|^N dy}{\int_{\re^N_+} \frac{|v_\e^l(y)|^N}{|y_N|^N} dy} + o_l(1)
	\le  \(\frac{N-1}{N} \)^N + \e + o_l(1).
\end{align*}
This implies that
\[
	\inf_{u \in W_0^{1,N}(\Omega) \setminus \{0\}} \frac{\intO \big |\nabla u \big|^N dx}{\intO \frac{|u(x)|^N}{|x|^N \(\log\frac{R}{|x|}\)^N} dx}
	\le \( \frac{N-1}{N} \)^N.
\]
Since $C_N(\Omega) \ge C_N(B_R) = \(\frac{N-1}{N} \)^N$ by \eqref{C_N_lower}, we conclude the equality.
This again implies that the infimum $C_N(\Omega)$ is not attained.
\end{proof}
\end{section}

%
%

\begin{section}{Hardy's inequality for nonsmooth domains}

In this section, first we provide a sufficient condition to assure the strict inequality $C_N(\Omega) > C_N(B_R)$ for bounded domains $\Omega$ with $R = \sup_{x \in \Omega} |x|$.

First, we recall the notion of spherical symmetric rearrangement. 
Let $B_r(p,s)$ denote the geodesic open ball in $S^{N-1}(r)$ with center $p \in S^{N-1}(r)$ and geodesic radius $s$.
Then for each $r \in (0,R)$, there exists a constant $a(r) \ge 0$ such that
the $(N-1)$-dimensional measure of the geodesic open ball $B_r(r e_N, a(r))$ with center $r e_N = (0,\cdots,0,r)$ and radius $a(r)$ equals to $\mathcal{H}^{N-1}(\Omega \cap S^{N-1}(r))$,
here $\mathcal{H}^{N-1}$ denotes the $(N-1)$-dimensional Hausdorff measure.
Define the {\it spherical symmetric rearrangement} $\Omega^*$ of a domain $\Omega \subset B_R$ by
\[
	\Omega^* \equiv  \bigcup_{r \in (0,R)} B_r(r e_N, a(r))
\]
and the {\it spherical symmetric rearrangement} $u^*$ of a function $u$ on $\Omega$ by
\[
	u^*(x) \equiv \sup \{ t \in \re \, | \, x \in \{ x \in \Omega \, | \, u(x) \ge t \}^{*} \}, \quad x \in \Omega^*,
\]
see Kawohl \cite{Kawohl} p.17.
Note that this is an equimeasurable rearrangement with $u^*$ rotationally symmetric around the positive $x_N$-axis,
and there hold that the Polya-Szeg\"o type inequality
\[
	\intO |\nabla u|^p \, dx \ge \int_{\Omega^*} |\nabla u^*|^p \, dx
\]
for $u \in W^{1,p}_0(\Omega)$ with $p > 1$,
and the Hardy-Littlewood inequality
\[
	\intO u(x) v(x) \, dx \le \int_{\Omega^*} u^*(x) v^*(x) \, dx
\]
for nonnegative functions $u, v$ on $\Omega$, see \cite[pages 21, 23, and 26]{Kawohl}.

In the sequel, we use the {\it Poincar\'e inequality on a subdomain of spheres} of the following form:
\begin{proposition}
\label{prop-Poincare}
Let $S^n$ denote an $n$-dimensional unit sphere and $U \subset S^n$ be a relatively compact open set in $S^n$.
For any $1 \le p < \infty$,
there exists $C > 0$ depending on $p$ and $n$ such that the inequality
\[
	\int_U | \nabla_{S^n} u |^p dS_{\omega} \ge C | U |^{-p/n} \int_U |u|^p dS_{\omega}
\]
holds for any $u \in W^{1,p}_0(U)$.
Here $|U|$ denotes the $n$-dimensional measure of $U \subset S^n$.
\end{proposition}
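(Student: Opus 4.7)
The plan is to reduce to the Euclidean Poincar\'e inequality on a ball via two steps: a spherical symmetrization on $S^n$, followed by a bilipschitz identification via the exponential map. For the first step, by density it suffices to take $u \in C_c^\infty(U)$; let $u^*$ denote the spherically decreasing rearrangement of $u$, supported in a geodesic ball $B_\rho \subset S^n$ with $|B_\rho| = |U|$. The Polya--Szeg\H{o} inequality on the sphere and equimeasurability yield
\[
	\int_{B_\rho} |\nabla_{S^n} u^*|^p \, dS_{\omega} \leq \int_U |\nabla_{S^n} u|^p \, dS_{\omega}, \qquad \int_{B_\rho} |u^*|^p \, dS_{\omega} = \int_U |u|^p \, dS_{\omega}.
\]
Thus it suffices to prove the inequality when $U = B_\rho$ is a geodesic cap.

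For the second step, when $\rho$ is bounded away from $\pi$, I would use the exponential map at the centre of $B_\rho$ to diffeomorphically identify it with the Euclidean ball $\tilde B_\rho \subset \re^n$ of radius $\rho$. Since $S^n$ has bounded sectional curvature, the pullback of the round metric satisfies $c^{-1}\delta_{ij} \leq g_{ij}(y) \leq c\,\delta_{ij}$ on $\tilde B_\rho$ with $c$ depending only on an a priori upper bound on $\rho$. Hence Dirichlet energy and $L^p$ mass transfer to the Euclidean setting up to universal multiplicative constants, and the classical Euclidean Poincar\'e inequality $\int_{\tilde B_\rho} |\nabla v|^p\,dy \geq c_{n,p}\,\rho^{-p} \int_{\tilde B_\rho} |v|^p\,dy$ for $v \in W^{1,p}_0(\tilde B_\rho)$ yields the desired bound, since $|B_\rho| \asymp \rho^n$ in this regime gives $\rho^{-p} \asymp |B_\rho|^{-p/n}$.

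The main obstacle is securing uniformity as $\rho$ approaches $\pi$ (i.e.\ $|U|$ approaches $|S^n|$), where the metric comparison degenerates and the symmetrized cap fills almost the entire sphere. In the paper's intended application one has $p = N$ and $n = N - 1$, hence $p > n$, so every single point of $S^n$ carries positive $p$-capacity; a compactness/Faber--Krahn argument then yields a uniform positive lower bound on the Dirichlet $p$-eigenvalue across all proper geodesic caps $B_\rho \subsetneq S^n$. Combining this with the small-cap estimate (where, for $\rho$ bounded away from both $0$ and $\pi$, both $\rho^{-p}$ and $|B_\rho|^{-p/n}$ are order-one constants) produces a single constant $C = C(p,n)$ valid for every relatively compact open $U \subsetneq S^n$.
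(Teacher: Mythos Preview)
Your approach is essentially the same as the paper's: both begin with spherical symmetrization (P\'olya--Szeg\H{o}/Faber--Krahn) to reduce to a geodesic ball $U^*$, and then estimate the first Dirichlet $p$-eigenvalue of $U^*$ in terms of its volume. The difference lies in the second step. The paper invokes a scaling identity $\lambda_p(rB_1)=r^{-p}\lambda_p(B_1)$ together with $|rB_1|=r^n|B_1|$, which is a Euclidean argument and is not literally valid on $S^n$; your exponential-map/bilipschitz comparison is the honest way to make that step rigorous, and it recovers exactly the same exponent $|U|^{-p/n}$ for caps of radius bounded away from $\pi$.

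You are also right to isolate the regime $\rho\to\pi$, which the paper's proof does not address. In fact the proposition as stated is false for $p\le n$: if $U=S^n\setminus\{\text{point}\}$ then $W^{1,p}_0(U)=W^{1,p}(S^n)$ contains the constants and the left-hand side vanishes, while $|U|^{-p/n}$ stays bounded below. Your observation that in the paper's application one has $p=N$, $n=N-1$, hence $p>n$ and points have positive $p$-capacity, is exactly what rescues a uniform lower bound for large caps. So your argument is both correct for the intended use and more careful than the paper's own sketch.
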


\begin{proof}
The inequality $\int_U | \nabla_{S^n} u |^p dS_{\omega} \ge C(U, p)  \int_U |u|^p dS_{\omega}$ holds, see for example, \cite{Saloff-Coste} pp.86.
The constant $C(U, p)$ is bounded from below by the first Dirichlet eigenvalue $\lambda_p(U)$ of the $p$-Laplacian $-\Delta_p$ on the sphere,
and the estimate
\[
	\la_p(U) \ge C(n, p) |U|^{-p/n}
\]
can be seen, for example, in \cite{Lieb} or \cite{Kawohl-Fridman} when the ambient space is $\re^n$.
Indeed, the lower bound of the first Dirichlet eigenvalue is also obtained on spheres.
By spherically symmetric rearrangement, we have the Faber-Krahn type inequality
\[
	\la_p(U) \ge \la_p(U^*)
\]
where $U^* \subset S^n$ be a geodesic ball with $|U| = |U^*|$.
Also we have a scaling property $\la_p(r U) = r^{-p} \la_p(U)$ for the first eigenvalue of the $p$-Laplacian.
Since $U^* = r B_1$ for some $r > 0$ where $B_1$ denotes the geodesic ball of radius $1$, we have $|U| = |U^*| = r^n |B_1|$,
which implies $r = (|U|/|B_1|)^{1/n}$.
Thus we have
\[
	\la_p(U) \ge \la_p(U^*) = \la_p(r B_1) = r^{-p} \la_p(B_1) = \(\frac{|U|}{|B_1|}\)^{-p/n} |B_1|.
\]
\end{proof}

Define
\begin{equation}
\label{m(r)}
	m(r) = \mathcal{H}^{N-1}( \{ x \in \Omega \, | \, |x| = r \}) = \mathcal{H}^{N-1}(\Omega \cap S^{N-1}(r))
\end{equation}
for $r \in (0, R)$. 
Then we have the following. 
%
%
\begin{theorem}
\label{theorem-inequality}
If
\begin{equation}
\label{m_0}
	m_0 \equiv \limsup_{r \to 0} \, m(r)/r^{N-1} < \omega_{N-1}
\end{equation}
and
\begin{equation}
\label{m_R_finite}
	m_R \equiv \limsup_{r \to R} \, m(r)/(R-r)^{N-1} <  \infty,
\end{equation}
it holds that
\[
	C_N(\Omega) > \( \frac{N-1}{N} \)^N.
\]
\end{theorem}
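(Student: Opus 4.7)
The plan is to pass to polar coordinates $x = r\omega$ with $\omega \in S^{N-1}$ and make the logarithmic substitution $t = \log(R/r) \in (0,\infty)$. Writing $v(t,\omega) = u(Re^{-t}\omega)$ and setting $\Omega_t = \{\omega \in S^{N-1} : Re^{-t}\omega \in \Omega\}$, a direct computation will show
\begin{align*}
\intO |\nabla u|^N \, dx &= \int_0^\infty \int_{\Omega_t} \( v_t^2 + |\nabla_\omega v|^2 \)^{N/2} dS_\omega \, dt, \\
\intO W_R(x)\, |u|^N \, dx &= \int_0^\infty \int_{\Omega_t} \frac{|v|^N}{t^N} \, dS_\omega \, dt,
\end{align*}
where $|\Omega_t| = m(Re^{-t})/(Re^{-t})^{N-1}$ and $v(0,\omega) = 0$ is forced by the Dirichlet condition on $\pd B_R \cap \ol\Omega$.

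Since $N \ge 2$, the elementary inequality $(a^2+b^2)^{N/2} \ge a^N + b^N$ for $a,b \ge 0$ gives
\[
\intO |\nabla u|^N \, dx \ge \int_0^\infty \int_{\Omega_t} |v_t|^N \, dS_\omega \, dt + \int_0^\infty \int_{\Omega_t} |\nabla_\omega v|^N \, dS_\omega \, dt =: I_1 + I_2.
\]
The one-dimensional $L^N$ Hardy inequality applied ray-by-ray in $\omega$ and integrated gives $I_1 \ge \(\frac{N-1}{N}\)^N \intO W_R |u|^N dx$, i.e.\ exactly the sharp Hardy constant; thus any strict improvement of $C_N(\Omega)$ over $\(\frac{N-1}{N}\)^N$ must come from the angular term $I_2$.

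To bound $I_2$ from below I will split $(0,\infty) = (0, T] \cup [T, \infty)$ and apply Proposition \ref{prop-Poincare} in each regime. For small $t$, hypothesis \eqref{m_R_finite} combined with $R - Re^{-t} \le Rt$ yields $|\Omega_t| \le C t^{N-1}$, so that the Poincar\'e constant $|\Omega_t|^{-N/(N-1)}$ is of order $t^{-N}$ and produces
\[
\int_{\Omega_t} |\nabla_\omega v|^N \, dS_\omega \ge C_1 \, t^{-N} \int_{\Omega_t} |v|^N \, dS_\omega, \qquad t \in (0, T].
\]
For large $t$, hypothesis \eqref{m_0} gives $|\Omega_t| \le m_0 + \eta < \omega_{N-1}$ once $T$ is large enough, so the spherical Faber-Krahn inequality used in Proposition \ref{prop-Poincare} provides a uniform bound $\la_N(\Omega_t) \ge \la_0 > 0$; the trivial estimate $|v|^N \ge T^N |v|^N / t^N$ on $[T, \infty)$ then converts this into a comparison against the Hardy weight. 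Combining the two subintervals, $I_2 \ge \varepsilon_0 \intO W_R |u|^N dx$ with $\varepsilon_0 := \min(C_1, \la_0 T^N) > 0$, and adding the bounds for $I_1$ and $I_2$ produces $C_N(\Omega) \ge \(\frac{N-1}{N}\)^N + \varepsilon_0$, as required.

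The critical technical point is the precise matching of exponents near $t = 0$: the exponent $N-1$ in \eqref{m_R_finite} is exactly what renders the spherical Poincar\'e constant of order $t^{-N}$, and hence comparable with the Hardy weight near $\pd B_R$; any weaker decay of $m(r)$ as $r \to R$ would fail to dominate the weight and the argument would break down in that region. The remaining ingredients (a Fubini justification for the ray-wise 1D Hardy, the pointwise inequality $(a^2+b^2)^{N/2} \ge a^N + b^N$, and the spherical Faber-Krahn bound in the large-$t$ regime) are routine once the hypotheses \eqref{m_0} and \eqref{m_R_finite} have been localized to the subintervals $[T, \infty)$ and $(0, T]$ respectively.
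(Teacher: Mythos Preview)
Your polar-coordinate decomposition and the two ingredients you isolate---the ray-wise one-dimensional Hardy inequality for $I_1$ and the spherical Poincar\'e inequality from Proposition~\ref{prop-Poincare} for $I_2$---are exactly the tools the paper uses. The gap is in the claimed bound $I_2 \ge \varepsilon_0 \intO W_R\,|u|^N\,dx$. The hypotheses \eqref{m_0} and \eqref{m_R_finite} are $\limsup$ conditions: they constrain the slice $\Omega_t$ only for $t$ near $\infty$ and near $0$, respectively. For intermediate $t$ nothing prevents $\Omega_t = S^{N-1}$; that is, $\Omega$ may contain a full annulus $\{a<|x|<b\}$ with $0<a<b<R$ and still satisfy both hypotheses. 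On such slices Proposition~\ref{prop-Poincare} yields nothing (the Dirichlet eigenvalue of the whole sphere is zero). Concretely, take $u$ radial and supported in that annulus: then $\nabla_\omega v\equiv 0$, so $I_2=0$ while $\intO W_R\,|u|^N\,dx>0$. Hence no uniform $\varepsilon_0>0$ can satisfy $I_2\ge\varepsilon_0\intO W_R\,|u|^N\,dx$, and the additive scheme $I_1+I_2\ge\big((\tfrac{N-1}{N})^N+\varepsilon_0\big)\intO W_R\,|u|^N\,dx$ breaks down. (For such radial $u$ the inequality $I_1>(\tfrac{N-1}{N})^N\intO W_R|u|^N dx$ is of course strict, but this strictness is not uniform in $u$, so it does not rescue the argument.)

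The paper avoids this by arguing by contradiction. Assuming $C_N(\Omega)=(\tfrac{N-1}{N})^N$, it passes to the spherical rearrangement $\Omega^*$, takes minimizers $\psi_n$ on truncations $\Omega_n^*$, and uses the non-attainability of $C_N(B_R)$ to deduce that $\psi_n\to 0$ weakly in $W_0^{1,N}(\Omega^*)$ and locally uniformly on every fixed annulus $\{R'\le|x|\le\tilde R\}$. Consequently the normalized weighted mass $\int W_R|\psi_n|^N=1$ must concentrate either near $r=0$ or near $r=R$, and each alternative is then excluded using exactly your angular Poincar\'e and radial Hardy ingredients---which are valid precisely in those two regimes by \eqref{m_0} and \eqref{m_R_finite}. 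The compactness/concentration step is what makes the uncontrolled middle range harmless; your ingredients are the right ones, but they must be deployed against a concentrating minimizing sequence rather than an arbitrary test function.
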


\begin{proof}
If $0 \in \Omega$, then $m(r) = r^{N-1}\omega_{N-1}$ for any small $r > 0$.
Thus under the assumption \eqref{m_0}, the origin must not be interior of $\Omega$.

We assume the contrary and suppose that there exists a sequence $\{\phi_n\}_{n \in \N} $ in $ C_0^\infty(\Omega)\setminus \{0\}$ such that
\[
	\lim_{n \to \infty} \frac{\intO \big|\nabla \phi_n \big|^N dx}{\intO \frac{|\phi_n(x)|^N}{|x|^N \(\log\frac{R}{|x|} \)^N} dx} = C_N(\Omega) = \(\frac{N-1}{N} \)^N.
\]
Let $\phi_n^*$ be the spherical symmetric rearrangement of $\phi_n$.
Then by the above remarks, it follows that
\[
	\lim_{n \to \infty} \frac{\int_{\Omega^*} \big |\nabla \phi^*_n \big|^N dx}{\int_{\Omega^*} \frac{|\phi^*_n(x)|^N}{|x|^N \( \log\frac{R}{|x|} \)^N} dx}
	= C_N(\Omega^*) = \(\frac{N-1}{N} \)^N.
\]
Since $\textrm{supp}(\phi_n^*)$ is compact in $\Omega^*$,
we find positive constants $R_n$ and $\delta_n$ with $\lim_{n \to \infty}R_n$ $=$ $R$ and $\lim_{n \to \infty} \delta_n = 0$ such that
$\textrm{supp}(\phi^*_n) \subset B_{R_n} \setminus \ol{B_{\delta_n}}$.
We define
\[
	\Omega^*_n \equiv \Omega^* \cap (B_{R_n} \setminus \ol{B_{\delta_n}}).
\]
Since the weight function $W_R$ is bounded from above and below by positive constants on $\Omega_n^*$,
there exists a minimizer $\psi_n \in W^{1,N}_0(\Omega^*_n)$ of
\[
	c_n \equiv \inf \Big \{ \int_{\Omega^*_n} \big |\nabla \psi \big|^N dx \ \Big | \
	\int_{\Omega^*_n} \frac{|\psi(x)|^N}{|x|^N \( \log\frac{R}{|x|} \)^N} dx = 1, \, \psi \in W_0^{1,N}(\Omega^*_n) \Big \}.
\]
We may assume $\psi_n \ge 0$, $\psi_n$ satisfies
\[
	\textrm{div}(|\nabla \psi_n|^{N-2}\nabla \psi_n) + c_n \frac{\psi_n(x)^{N-1}}{|x|^N\big (\log\frac{R}{|x|}\big )^N} = 0 \quad  \textrm {in} \ \Omega^*_n,
\]
and $\psi_n$ is rotationally symmetric with respect to $x_N$-axis.
We think that $\psi_n$ is defined on $\Omega^*$ by extending by zero.
Then we see
\begin{equation}
\label{c_n}
	\int_{\Omega^*} |\nabla \psi_n|^{N} dx = c_n \to \Big(\frac{N-1}{N}\Big )^N
\end{equation}
as $n \to \infty$.
Since $\( \frac{N-1}{N} \)^N$ is not attained by any element in $W_0^{1,N}(\Omega^*)$,
elliptic estimates imply that for any small $R^{\prime} > 0$ and any  $\tilde{R} < R$ sufficiently close to $R$,
$\psi_n$ converges uniformly to $0$ on  $\Omega^* \cap (B_{\tilde{R}} \setminus \ol{B_{R^{\prime}}})$
and $\psi_n$ converges weakly to $0$ in $W_0^{1,N}(\Omega^*)$ as $n \to \infty$.
We denote
\[
	\Omega^*(r) \equiv \{ \omega \in S^{N-1} \ | \ r\omega \in \Omega^* \} \subset S^{N-1},
\]
so $m(r) = r^{N-1} \mathcal{H}^{N-1}(\Omega^*(r))$.
Then we note that
\begin{align}
\label{concentration}
	1 &= \int_{\Omega^*} \frac{|\psi_n(x)|^N}{\big (|x|\log\frac{R}{|x|}\big )^N} dx =
	  \int_0^R \int_{\Omega^*(r)} \frac{|\psi_n(r\omega)|^N }{r\big (\log\frac{R}{r}\big )^N} dS_{\omega} dr \notag \\
  	&= \int_0^{R^\prime} \int_{\Omega^*(r)}\frac{|\psi_n(x)|^N}{r\big (\log\frac{R}{r}\big )^N} dS_{\omega} dr
	  + \int_{\tilde{R}}^R \int_{\Omega^*(r)} \frac{|\psi_n(r\omega)|^N }{r\big (\log\frac{R}{r}\big )^N} dS_{\omega} dr + o_n(1)
\end{align}
as $n \to \infty$.

%
%
First, let us assume 
\begin{equation}
\label{concentration on zero}
	\lim_{n \to \infty} \int_0^{R^{\prime}} \int_{\Omega^*(r)} \frac{|\psi_n(r\omega)|^N}{r \(\log\frac{R}{r} \)^N} dS_{\omega} dr \ge C
\end{equation}
for some $C > 0$.
Since $m_0 <\omega_{N-1}$ by assumption \eqref{m_0},
$\Omega^*(r)$ is a proper subset of $S^{N-1} \setminus \{ -e_N \} \simeq \re^{N-1}$ for any small $r >0$.
Thus there exists a constant $C > 0$ independent of small $r > 0$ and $n \in \N$ such that the Poincar\'e inequality
in Proposition \ref{prop-Poincare} (with $U = \Omega^*(r)$, $p = N$, $n = N-1$)
\begin{equation}
\label{Poincare}
	\int_{\Omega^*(r)}|\nabla_{S^{N-1}} \psi_n(r\omega)|^N  dS_{\omega} \ge C \int_{\Omega^*(r)}|\psi_n(r\omega)|^N dS_{\omega}
\end{equation}
holds true.
Note that
\[
	\nabla \psi_n = \frac{x}{|x|}\frac{\partial \psi_n}{\partial r} + \frac{1}{r} \nabla_{S^{N-1}}\psi_n, \qquad
	|\nabla \psi_n|^N \ge \left| \frac{\partial \psi_n}{\partial r} \right|^N + \frac{1}{r^N} |\nabla_{S^{N-1}}\psi_n|^N.
\]
Then for each small $R^\prime > 0$, we have
\begin{align}
\label{poes1}
	\int_{\Omega^*}|\nabla \psi_n|^N dx &= \int_0^R \int_{\Omega^*(r)} \nabla \psi_n(r\omega)|^N r^{N-1} dS_{\omega} dr \notag \\
	&\ge \int_0^{R^\prime} \int_{\Omega^*(r)} \frac{1}{r^{N}} |\nabla_{S^{N-1}} \psi_n|^N r^{N-1} dS_{\omega} dr \notag \\
	&\ge C\int_{0}^{R^\prime} \int_{\Omega^*(r)} \frac{|\psi_n(r\omega)|^N}{r}  dS_{\omega} dr
\end{align}
by the Poincar\'e inequality \eqref{Poincare}.
On the other hand, since
\[
	\int_0^{R^\prime} \int_{\Omega^*(r)}\frac{|\psi_n(r\omega)|^N}{r} dS_{\omega} dr
	\ge \( \log\frac{R}{R^\prime} \)^N
	\int_0^{R^\prime} \int_{\Omega^*(r)} \frac{|\psi_n(r\omega)|^N }{r \(\log\frac{R}{r} \)^N} dS_{\omega} dr,
\]
we have by \eqref{concentration on zero},
\begin{equation}
\label{C2}
	\int_0^{R^\prime} \int_{\Omega^*(r)}\frac{|\psi_n(r\omega)|^N}{r} dS_{\omega} dr \ge \(C + o_n(1) \) \( \log\frac{R}{R^\prime} \)^N
\end{equation}
where $o_n(1) \to 0$ as $n \to \infty$.
Then by \eqref{c_n}, \eqref{poes1}, and \eqref{C2}, we have
\begin{align*}
	\( \frac{N-1}{N} \)^N + o_n(1) &= \int_{\Omega^*}|\nabla \psi_n|^N dx \ge \frac{C}{2} \( \log\frac{R}{R^\prime} \)^N
\end{align*}
as $n \to \infty$.
This inequality is invalid if $R^{\prime}$ is very small.
Thus \eqref{concentration on zero} cannot happen and
\[
	\lim_{n \to \infty} \int_0^{R^{\prime}} \int_{\Omega^*(r)} \frac{|\psi_n(r\omega)|^N}{r \(\log\frac{R}{r} \)^N} dS_{\omega} dr = 0
\]
under the assumption \eqref{m_0}.

%
%
Therefore by \eqref{concentration}, we have
\begin{equation}
\label{concentration on boundary}
	\lim_{n \to \infty} \int_{\tilde{R}}^R \int_{\Omega^*(r)} \frac{|\psi_n(r\omega)|^N}{r \(\log\frac{R}{r} \)^N} dS_{\omega} dr = 1.
\end{equation}

Next, we will prove that \eqref{concentration on boundary} cannot occur under the assumption \eqref{m_R_finite}.
In fact, we see by \eqref{concentration on boundary} and \eqref{Taylor} that
\begin{align*}
	1 + o_n(1) &= \int_{\tilde{R}}^R \int_{\Omega^*(r)} \frac{|\psi_n(r\omega)|^N}{\( r \log \frac{R}{r} \)^N}r^{N-1} dS_{\omega} dr \\
	&= (1+o(1)) R^{N-1}\int_{\tilde{R}}^R \int_{\Omega^*(r)} \frac{|\psi_n(r\omega)|^N}{\( R - r \)^N}dS_{\omega} dr,
\end{align*}
where $o_n(1) \to 0$ as $n \to \infty$ and $o(1) \to 0$ as $\tilde{R} \to R$.
Thus we have
\begin{equation}
\label{ap}
	\lim_{n \to \infty} \int_{\tilde{R}}^R \int_{\Omega^*(r)} \frac{|\psi_n(r\omega)|^N}{\( R - r \)^N}dS_{\omega} dr = (1 + o(1)) R^{-(N-1)}
\end{equation}
as $\tilde{R} \to R$.
On the other hand, since $\psi_n(r\omega) \big|_{r = R} = 0$, we can apply the one-dimensional Hardy inequality
\begin{equation}
\label{1D_Hardy}
	\(\frac{N-1}{N}\)^N \int_{\tilde{R}}^R \frac{|\psi_n(r\omega)|^N}{\(R - r \)^N} dr \le \int_{\tilde{R}}^R  \bigg |\frac{\partial \psi_n(r\omega)}{\partial r}\bigg |^N dr
\end{equation}
to $\psi_n(r\omega)$.
Note that the best constant $\(\frac{N-1}{N}\)^N$ in the inequality \eqref{1D_Hardy} is the same as, by assumption, the value of $C_N(\Omega^*)$.
Then \eqref{1D_Hardy} implies
\begin{align*}
	\(\frac{N-1}{N}\)^N \int_{\tilde{R}}^R \int_{\Omega^*(r)} \frac{|\psi_n(r\omega)|^N}{\(R - r \)^N}dS_{\omega} dr
	&\le \int_{\tilde{R}}^R \int_{\Omega^*(r)} \bigg |\frac{\partial \psi_n}{\partial r}(r\omega) \bigg |^N dS_{\omega} dr \\
	&= (1+o(1)) R^{-(N-1)} \int_{\Omega^*} \bigg|\frac{\partial \psi_n}{\partial r}(x) \bigg|^N dx.
\end{align*}
The above inequality, \eqref{ap} and $C_N(\Omega^*) = (\frac{N-1}{N})^N  = \lim_{n \to \infty} \int_{\Omega^*} |\nabla \psi_n(x)|^N dx$ by \eqref{c_n}
imply that
\[
	\lim_{n \to \infty} \int_{\Omega^*}|\nabla \psi_n|^N dx \le \lim_{n \to \infty} \int_{\Omega^*}\bigg|\frac{\partial \psi_n}{\partial r}(x) \bigg|^N dx.
\]
The converse inequality holds trivially, thus we see that
\[
	\lim_{n \to \infty} \int_{\Omega^*}|\nabla \psi_n|^N dx
	= \lim_{n \to \infty} \int_{\Omega^*}\bigg|\frac{\partial \psi_n}{\partial r}\bigg|^N dx,
\]
which implies
\begin{equation}
\label{angle vanish}
	\lim_{n \to \infty} \int_{R^\prime}^R \int_{r \Omega^*(r)} |\nabla_{S^{N-1}(r)}\psi_n(\sigma)|^N| d\sigma_r dr = 0,
\end{equation}
here $\sigma = r\omega \in S^{N-1}(r)$, $d\sigma_r = r^{N-1} dS_{\omega}$ is a volume element of a geodesic ball $r\Omega^*(r)$ with center $r e_N$ in $S^{N-1}(r)$,
and $\nabla_{S^{N-1}(r)} = (1/r) \nabla_{S^{N-1}}$.

%
%
From the assumption $m_R < \infty$ in \eqref{m_R_finite}, there exists a constant $ C > 0$ independent of $r \in (\tilde{R}, R)$ and $n$ such that
\[
	r^{N-1} \mathcal{H}^{N-1}(\Omega^*(r)) \le C(R-r)^{N-1}
\]
holds true.
This implies that \[\( \mathcal{H}^{N-1}(r \Omega^*(r)) \)^{-N/(N-1)}  \ge  D (R - r)^{-N},\]
where $D = C^{-N/(N-1)} > 0$ independent of $r \in (\tilde{R}, R)$ and $n$.
Then, by the Poincar\'e inequality in Proposition \ref{prop-Poincare} ($n = N-1$, $p = N$) on the spherical cap $U = r \Omega^*(r) \subset S^{N-1}(r)$,
\begin{equation}
\label{Poincare2}
	\int_{r\Omega^*_r}  |\nabla_{S^{N-1}(r)}\psi_n(\sigma)|^N d\sigma_r \ge  D \int_{r\Omega^*_r} \frac{|\psi_n(\sigma)|^N}{|R-r|^N} d\sigma_r
\end{equation}
holds true.
Combining \eqref{angle vanish} and \eqref{Poincare2}, we have
\begin{align*}
	o_n(1) &= \int_{\tilde{R}}^R \int_{r\Omega^*(r)} |\nabla_{S^{N-1}(r)}\psi_n(\sigma)|^N d\sigma_r dr \ge D \int_{\tilde{R}}^R \int_{r \Omega^*(r)} \frac{|\psi_n(\sigma)|^N}{|R-r|^N} d\sigma_r dr \\
	&= (1+o(1)) D R^{N-1}  \int_{\tilde{R}}^R \int_{\Omega^*(r)} \frac{|\psi_n(r\omega)|^N}{\(R - r \)^N} dS_{\omega} dr
\end{align*}
where $o_n(1) \to 0$ as $n \to \infty$ and $o(1) \to 0$ as $\tilde{R} \to R$.
Combining this to \eqref{ap} and letting $n \to \infty$, we see
\[
	0 = D (1 + o(1))R^{N-1} \times (1 + o(1)) R^{-(N-1)} = D + o(1)
\]
as $\tilde{R} \to R$.
This is a contradiction and we complete the proof.
\end{proof}

%
%

Next, we prove that a condition on $\Omega$ stronger than that of in Theorem \ref{theorem-inequality} assures the attainability of $C_N(\Omega)$.
The condition below implies that the boundary point $x \in \partial B_R \cap \pd\Omega$, if it existed, must be cuspidal,
but the origin, if $0 \in \partial \Omega$, may be a Lipschitz continuous boundary point.

\begin{theorem}
\label{theorem-existence}
For $r \in (0,R)$, let $m(r)$ be defined as \eqref{m(r)}.
If
\[
	m_0 \equiv \limsup_{r \to 0} \, m(r)/r^{N-1} < \omega_{N-1}
\]
and
\begin{equation}
\label{m_R_0}
	m_R \equiv \limsup_{r \to R} \, m(r)/(R-r)^{N-1} = 0,
\end{equation}
then
\[
	C_N(\Omega) > \( \frac{N-1}{N} \)^N
\]
and $C_N(\Omega)$ is attained.
\end{theorem}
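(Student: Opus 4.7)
The strict inequality $C_N(\Omega) > \left(\frac{N-1}{N}\right)^N$ is immediate from Theorem \ref{theorem-inequality}, since $m_R = 0$ trivially implies $m_R < \infty$. For attainability, I would use the direct method. Take a nonnegative minimizing sequence $\{u_n\} \subset W_0^{1,N}(\Omega)$ normalized by $\int_\Omega W_R |u_n|^N\,dx = 1$ and $\int_\Omega |\nabla u_n|^N\,dx \to C_N(\Omega)$, extract a weakly convergent subsequence $u_n \rightharpoonup u$ in $W_0^{1,N}(\Omega)$, which by Rellich--Kondrachov also converges pointwise a.e.\ and strongly in $L^q(\Omega)$ for every $q < \infty$. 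Weak lower semicontinuity gives $\int_\Omega |\nabla u|^N\,dx \le C_N(\Omega)$, and Fatou's lemma gives $\int_\Omega W_R |u|^N\,dx \le 1$. To conclude that $u$ is a minimizer it suffices to establish the reverse inequality, i.e.\ to rule out loss of mass of $W_R |u_n|^N\,dx$ at the two singular loci of $W_R$: the origin and $\partial B_R \cap \overline{\Omega}$. Write $\Omega(r) := \{\omega \in S^{N-1} : r\omega \in \Omega\}$, so that $|\Omega(r)| = m(r)/r^{N-1}$.

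Tightness near the origin essentially repeats the corresponding step in the proof of Theorem \ref{theorem-inequality}. Fix $\eta > 0$ with $m_0 + \eta < \omega_{N-1}$; then for all small $r$, $|\Omega(r)| \le m_0 + \eta$ stays bounded away from $\omega_{N-1}$, so Proposition \ref{prop-Poincare} yields a uniform constant $C_\eta > 0$ with
\[
\int_{\Omega(r)} |\nabla_{S^{N-1}} u_n(r\omega)|^N\,dS_\omega \ge C_\eta \int_{\Omega(r)} |u_n(r\omega)|^N\,dS_\omega.
\]
Combining this with the pointwise bound $|\nabla u_n|^N \ge r^{-N}|\nabla_{S^{N-1}} u_n|^N$ and the elementary inequality $1/r \ge \log^N(R/R')/(r\log^N(R/r))$ on $(0, R')$, integration in polar coordinates gives
\[
\int_{\Omega \cap B_{R'}} W_R |u_n|^N\,dx \le \frac{C}{\log^N(R/R')},
\]
uniformly in $n$, and the right-hand side vanishes as $R' \downarrow 0$.

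The essential new step is tightness near $\partial B_R$, for which the strengthened hypothesis $m_R = 0$ is used decisively. Given $\epsilon > 0$, choose $\delta > 0$ so that $m(r) \le \epsilon (R-r)^{N-1}$ for $r \in (R-\delta, R)$; then $|\Omega(r)| \le \epsilon ((R-r)/r)^{N-1}$ is small, and Proposition \ref{prop-Poincare} yields
\[
\int_{\Omega(r)} |\nabla_{S^{N-1}} u_n|^N\,dS_\omega \ge C\,\epsilon^{-N/(N-1)} \frac{r^N}{(R-r)^N} \int_{\Omega(r)} |u_n|^N\,dS_\omega.
\]
Integrating against $r^{N-1}\,dr$ over $(R-\delta, R)$ and using \eqref{Taylor} to compare $(R-r)^{-N}$ with $(r\log(R/r))^{-N}$ produces
\[
\int_\Omega |\nabla u_n|^N\,dx \ge C'\,\epsilon^{-N/(N-1)} \int_{\Omega \cap (B_R \setminus B_{R-\delta})} W_R |u_n|^N\,dx,
\]
so $\int_{\Omega \cap (B_R \setminus B_{R-\delta})} W_R |u_n|^N\,dx \le C''\,\epsilon^{N/(N-1)}$, uniformly in $n$, which vanishes as $\epsilon \downarrow 0$. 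On the intermediate annulus $\Omega \cap (B_{R-\delta} \setminus B_{R'})$, $W_R$ is bounded, so $\int W_R |u_n|^N\,dx \to \int W_R |u|^N\,dx$ there by strong $L^N$ convergence. Combining the three pieces gives $\int_\Omega W_R |u|^N\,dx \ge 1$, hence equality, so $u$ attains the infimum.

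The main obstacle is the boundary tightness step. Theorem \ref{theorem-inequality} only used $m_R < \infty$ to contradict full concentration at $\partial B_R$, whereas attainability demands a quantitative bound uniform in the sequence. The strengthened assumption $m_R = 0$ supplies precisely the vanishing of $|\Omega(r)|/(R-r)^{N-1}$ needed for the Faber--Krahn exponent $-N/(N-1)$ in Proposition \ref{prop-Poincare} to dominate the $(R-r)^{-N}$ singularity of $W_R$, with a constant that can be made arbitrarily small; this matching of exponents is the quantitative heart of the argument.
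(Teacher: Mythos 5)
Your proof is correct, but it is organized quite differently from the paper's. The paper does not work with an arbitrary minimizing sequence: it introduces the truncated domains $\Omega_n=\Omega\cap(B_{R-1/n}\setminus\overline{B_{1/n}})$, on which the weight $W_R$ is bounded so that constrained minimizers $\psi_n$ exist and solve the Euler--Lagrange equation; it then takes the weak limit $u$ of the $\psi_n$, upgrades to local uniform convergence by elliptic estimates, and argues by contradiction: if $u\equiv 0$, the unit mass $\int_\Omega W_R|\psi_n|^N\,dx=1$ must concentrate entirely at the origin or at $\partial B_R$, the first being excluded by the slicewise Poincar\'e inequality under $m_0<\omega_{N-1}$ and the second by showing that $m_R=0$ forces the Faber--Krahn constant $g(r)\to\infty$ and hence $\int_\Omega|\nabla\psi_n|^N\,dx\to\infty$. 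You instead run the direct method on the original sequence and replace the contradiction by two \emph{uniform tightness} estimates --- mass $O(\log^{-N}(R/R'))$ near the origin and $O(\epsilon^{N/(N-1)})$ near $\partial B_R$, uniformly in $n$ --- after which strong $L^N$ convergence on the intermediate annulus (where $W_R$ is bounded) gives $\int_\Omega W_R|u|^N\,dx\ge 1$. The analytic core is identical: Proposition \ref{prop-Poincare} on the slices $\Omega(r)$, with $m_0<\omega_{N-1}$ giving a uniform constant near $0$ and $m_R=0$ making $|\Omega(r)|^{-N/(N-1)}\gtrsim\epsilon^{-N/(N-1)}(R-r)^{-N}$ beat the singularity of $W_R$ near the boundary. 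Your route dispenses with the auxiliary minimizers, the Euler--Lagrange equations and the regularity theory, and yields quantitative non-concentration rather than merely excluding total concentration; what the paper's construction buys in exchange is that its limit automatically comes with $C^1$ regularity and satisfies the Euler--Lagrange equation. Two small points to keep in view when writing this up: apply the slicewise Poincar\'e inequality to smooth members of the sequence (or invoke density) so that $u_n(r\,\cdot)\in W^{1,N}_0(\Omega(r))$ for a.e.\ $r$, and note that the implicit constant coming from \eqref{Taylor} in the boundary estimate stays bounded as $\delta\downarrow 0$, so it does not interfere with sending $\epsilon\downarrow 0$.
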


\begin{proof}
The strict inequality $C_N(\Omega) > \( \frac{N-1}{N} \)^N $ was proved in Theorem \ref{theorem-inequality}.

For each positive integer $n$, we define
\[
	\Omega_n \equiv \Omega \cap (B_{R-1/n} \setminus \overline{B_{1/n}}).
\]
Then, since the weight function $W_R(x)$ is bounded on $\Omega_n$, there exists a minimizer $\psi_n$ of
\[
	d_n \equiv \inf \Big \{ \int_{\Omega_n} \big |\nabla \psi\big|^N dx \ \Big | \
	\int_{\Omega_n} \frac{|\psi(x)|^N}{|x|^N \(\log\frac{R}{|x|}\)^N} dx = 1, \, \psi \in W_0^{1,N}(\Omega_n) \Big \}.
\]
We may assume $\psi_n \ge 0$ and $\psi_n$ satisfies
\[
	\textrm{div}(|\nabla \psi_n|^{N-2}\nabla \psi_n) +
	d_n \frac{\psi_n(x)^{N-1}}{|x|^N \(\log\frac{R}{|x|} \)^N} = 0 \ \ \textrm { in } \ \Omega_n.
\]
We note that
\[
	\int_{\Omega_n} |\nabla \psi_n|^{N} dx =  d_n \to C_N(\Omega) \ \textrm { as } \ n \to \infty.
\]
Let $u$ be a weak limit of the sequence $\{\psi_n\}_{n \in \N}$ in $W_0^{1,N}(\Omega)$.
Then, we see that for each positive integer $n_0$,
$\psi_n$ converges uniformly to $u$ in $C^{1}(\Omega_{n_0})$,
and that
\[
	\textrm{div}(|\nabla u|^{N-2}\nabla u) +
	C_N(\Omega) \frac{|u(x)|^{N-1}}{|x|^N \(\log\frac{R}{|x|} \)^N} = 0, \ \ u \ge 0  \ \  \textrm {in} \ \Omega.
\]
Now it suffices to prove that $u \ne 0$ in $\Omega$, then $u$ becomes a minimizer for $C_N(\Omega)$.

To the contrary, we assume that $u \equiv 0$.
Then, we see that for each positive integer $n_0$, $\psi_n$ converges uniformly to $0$ on $\Omega_{n_0}$.
We denote
\[
	\Omega(r) \equiv \{ \omega \in S^{N-1} \ | \ r\omega \in \Omega \} \subset S^{N-1}.
\]
Since $m_0 <\omega_{N-1}$,
by the spherical symmetric rearrangement, Poly\'a-Szeg\"o and the Poincar\'e inequality,
we see there exists a constant $C > 0$, independent of small $r > 0$ and $n \in \N$, such that
\[
	\int_{\Omega(r)}|\nabla_{S^{N-1}} \psi_n|^N  dS_{\omega} \ge C\int_{\Omega(r)}|\psi_n|^N dS_{\omega},
\]
see the proof of Theorem \ref{theorem-inequality}.
Then, we see that for each large positive integer $n_0$,
\begin{align}
\label{poes2}
	\intO |\nabla \psi_n|^N dx &\ge \int_0^{1/n_0} \int_{\Omega(r)} |\nabla_{S^{N-1}} \psi_n(r\omega)|^N r^{-1} dS_{\omega} dr \notag \\
	&\ge C\int_{0}^{1/n_0} \int_{\Omega(r)}|\psi_n(r\omega)|^N r^{-1} dS_{\omega} dr.
\end{align}
Put $f_n(r) \equiv \int_{\Omega(r)} |\psi_n(r\omega)|^N / r \(\log\frac{R}{r} \)^N dS_{\omega}$.
Then we have
\begin{align*}
	1 = &\intO \frac{|\psi_n(x)|^N}{\(|x|\log\frac{R}{|x|}\)^N} dx = \int_0^R \int_{\Omega(r)} \frac{|\psi_n(r\omega)|^N }{r \(\log\frac{R}{r} \)^N} dS_{\omega} dr \\
	& = \int_0^{1/n_0} f_n(r) dr + \int_{1/n_0}^{R-1/n_0} f_n(r) dr +  \int_{R-1/n_0}^{R} f_n(r) dr,
\end{align*}
and that
\begin{align*}
	&\int_0^{1/n_0} \int_{\Omega(r)} \frac{|\psi_n(r\omega)|^N }{r \(\log\frac{R}{r} \)^N} dS_{\omega} dr
	\le  \( \log\frac{R}{1/n_0} \)^{-N} \int_0^{1/n_0} \int_{\Omega(r)}\frac{|\psi_n(r\omega)|^N}{r} dS_{\omega} dr.
\end{align*}
Then, \eqref{poes2} implies that for each large positive integer $n_0$,
\[
	\int_0^{1/n_0} \int_{\Omega(r)} \frac{|\psi_n(r\omega)|^N }{r \(\log\frac{R}{r} \)^N} dS_{\omega} dr
	\le  \big (\log\frac{R}{1/n_0}\big )^{-N} \frac{d_n}{C}.
\]
The right-hand side of the above inequality can be arbitrarily small if $n_0$ large, thus we have
$\lim_{n \to \infty} \int_0^{1/n_0} f_n(r) dr = 0$.
Since $\lim_{n \to \infty}  \int_{1/n_0}^{R-1/n_0} f_n(r) dr = 0$,
we deduce that for each large positive integer $n_0$,
\[
	\lim_{n \to \infty} \int_{R-1/n_0}^R f_n(r) dr = 1.
\]

Now, as in the proof of Theorem \ref{theorem-inequality},
let $\Omega^*(r) \subset S^{N-1}$ be a geodesic ball with the center $e_N$ such that the $(N-1)$-dimensional measure of $\Omega^*(r)$ equals to that of $\Omega(r)$.
Let $\psi^*_n$ be the spherical symmetric rearrangement of $\psi_n$ and
put $f^*_n(r) = \int_{\Omega^*(r)} \frac{|\psi^*_n(r\omega)|^N }{r\big (\log\frac{R}{r}\big )^N} dS_{\omega}$.
Since $r\log(R/r) = (R-r) + o(1)$ for small $R - r > 0$,
we see that
\begin{equation}
\label{E1}
	f^*_n(r) = \int_{\Omega^*(r)} \frac{|\psi^*_n(r\omega)|^N }{r\big (\log\frac{R}{r}\big )^N} dS_{\omega} = R^{N-1} \int_{\Omega^*(r)} \frac{|\psi^*_n(r\omega)|^N }{(R-r)^N} dS_{\omega} + o(1)
\end{equation}
for small $R - r > 0$.

%
%
On the other hand, by the assumption $m_R = 0$,
there exists $h(r) > 0$ with $h(r) \to 0$ as $r \to R$ such that $\mathcal{H}^{N-1}(r \Omega^*(r)) \le h(r) (R - r)^{N-1}$.
Thus
\[
	\( \mathcal{H}^{N-1}(\Omega^*(r)) \)^{-N/(N-1)} \ge r^N \( h(r) \)^{-N/(N-1)} (R - r)^{-N}.
\]
Put $g(r) = r^N ( h(r) )^{-N/(N-1)}$. Then $\lim_{r \to R} g(r) = \infty$ and the Poincar\'e inequality in Proposition \ref{prop-Poincare}
(with $U = \Omega^*(r)$, $p = N$, $n = N-1$)
\begin{equation}
\label{E2}
	\int_{\Omega^*(r)} |\nabla_{S^{N-1}}\psi^*_n(r\omega)|^N dS_{\omega} \ge C g(r)\int_{\Omega^*(r)}  \frac{|\psi^*_n(r\omega)|^N}{|R-r|^N} dS_{\omega}
\end{equation}
holds. Here $C = C(N) >0$ is an absolute constant.
Then by \eqref{E1} and \eqref{E2}, we see
\[
	\int_{\Omega^*(r)} |\nabla_{S^{N-1}}\psi^*_n(r\omega)|^N dS_{\omega} \ge \frac{C}{2} g(r) \frac{f^*_n(r)}{R^{N-1}}
\]
and we may apply Poly\'a-Szeg\"o inequality
\[
	\int_{\Omega(r)} |\nabla_{S^{N-1}} \psi_n(r\omega)|^N dS_{\omega} \ge \int_{\Omega^*(r)} |\nabla_{S^{N-1}} \psi^*_n(r\omega)|^N dS_{\omega}.
\]
Then for large $n_0$, we have
\begin{align*}
	&\intO |\nabla \psi_n|^N dx \ge \int_{R - 1/n_0}^R \int_{\Omega(r)} |\nabla_{S^{N-1}} \psi_n(r\omega)|^N dS_{\omega} dr \\
	&\ge \int_{R-1/n_0}^{R} \frac{C}{2} \frac{g(r) f^*_n(r)}{R^{N-1}} dr \ge \frac{C g(r^*)}{2 R^{N-1}} \int_{R-1/n_0}^{R} f^*_n(r) dr
	= \frac{C g(r^*)}{2 R^{N-1}} (1 + o_n(1))
\end{align*}
where $r^*$ is a number with $r^* \in (R-1/n_0, R)$.
Since $g(r^*) \to \infty$ as $n_0 \to \infty$,
we conclude that $\lim_{n \to \infty}\int_{\Omega}|\nabla \psi_n|^N dx  = \infty$.
This is a contraction; thus $C_N(\Omega)$ is attained.

\end{proof}

\end{section}

%
%
\begin{section}{Nonexistence of a minimizer for a domain $\Omega$ with $C_2(\Omega) >  \frac{1}{4} $}

In this section, we provide a Lipschitz domain $\Omega$ in $\re^2$ on which $C_2(\Omega) > 1/4$ and $C_2(\Omega)$ is not attained. Recall Hardy's inequality \eqref{Hardy_half_inf} when $N = 2$:
\[
	\inf \left\{ \int_{\re^2_{+}} |\nabla u|^2 dx \ \Big | \
	\int_{\re^2_{+}} \frac{u^2}{(x_2)^2} dx = 1, \, u \in W_0^{1,2}(\re^2_+) \right\} = \frac {1}{4},
\]
and the best constant $1/4$ is not attained, where $x = (x_1, x_2)$.
For $a \in [0,\pi/2),$ we define
\[
	E(a) \equiv \inf \Big \{ \frac{\int_{a}^{\pi-a}(\phi_\theta)^2 d\theta} {\int_{a}^{\pi-a} (\phi^2 / \sin^2 \theta) d\theta}
	\ \Big  | \ \phi \in C_0^\infty((a,\pi-a)) \setminus \{ 0 \} \Big \}.
\]

From \cite[Corollary 4.4]{Davies}, we see that
\begin{equation}
\label{Davies}
	E \equiv E(0) = \inf \Big \{ \frac{\int_{0}^{\pi}(\phi_\theta)^2 d\theta} {\int_{0}^{\pi} (\phi^2/ \sin^2 \theta) d\theta}
	\ \Big | \ \phi \in C_0^\infty((0,\pi)) \setminus \{ 0 \} \Big \} = \frac {1}{4}
\end{equation}
and $E$ is not achieved.
We prove these facts in Appendix for the reader's convenience.
It is obvious that for $a \in (0,\pi/2),$ $E(a)$ is achieved by a positive function $\varphi_a$ on $(a,\pi-a).$
Since $E(0)$ is not achieved in $W^{1,2}_0(0,\pi),$ $E(a) > E(0) = \frac 14$ for $a \in (0,\pi/2).$
%
%
\begin{theorem}
\label{theorem-nonexistence}
There exists a domain $\Omega \subset B_1 \subset \re^2$
such that
$C_2(\Omega) > \frac{1}{4}$ and $C_2(\Omega)$ is not attained.
\end{theorem}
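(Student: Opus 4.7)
My plan is to exhibit a Lipschitz domain $\Omega \subset B_1 \subset \re^2$ that touches $\partial B_1$ at a single cuspidal vertex, and to identify $C_2(\Omega)$ with the Hardy constant of the tangent wedge at that vertex. Fix $c \in (0, \pi)$ and set
\[
\Omega = \{ r(\cos\theta, \sin\theta) : 0 < r < 1, \ |\theta - \pi/2| < c(1-r) \}.
\]
Then $\Omega$ is a Lipschitz subdomain of $B_1$ whose angular cross-section shrinks linearly to zero as $r \to 1$, and $\partial\Omega \cap \partial B_1 = \{(0,1)\}$. Computing $m(r) = 2cr(1-r)$ gives $m_0 = 2c < 2\pi$ and $m_R = 2c \in (0,\infty)$, so Theorem \ref{theorem-inequality} applies and yields $C_2(\Omega) > \tfrac{1}{4}$; it remains to establish non-attainment.

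The main device is the logarithmic polar change of variables $s = \log(1/r)$, $\tau = \theta - \pi/2$, which pulls the Rayleigh quotient back to
\[
\widetilde Q(v) = \frac{\int (v_s^2 + v_\tau^2) \, ds \, d\tau}{\int v^2/s^2 \, ds \, d\tau}
\]
on $\widetilde\Omega = \{(s,\tau) : s > 0, \, |\tau| < c(1-e^{-s})\}$. Since $1 - e^{-s} < s$ for $s > 0$, $\widetilde\Omega \subsetneq W$, where $W = \{s > 0, \, |\tau| < cs\}$ is the tangent wedge; extension by zero yields $C_W \le C_2(\Omega)$, with $C_W$ the infimum of $\widetilde Q$ over $W_0^{1,2}(W)$. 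For the reverse inequality I exploit the scale invariance of $\widetilde Q$ on $W$ under $v(s,\tau) \mapsto v(s/\epsilon, \tau/\epsilon)$: for $\phi \in C_c^\infty(W)$ with support in a slightly narrower wedge $\{|\tau| \le (1-\delta)cs\}$, its rescaling by small $\epsilon$ sits inside $\widetilde\Omega$ because $c(1-e^{-s}) \ge (1-\delta)cs$ on $0 \le s \le 2\delta$. Letting $\delta \to 0$ gives $C_2(\Omega) = C_W$.

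To compute $C_W$ and preclude a minimizer, I apply a second log-polar transformation $s = \rho\cos\phi$, $\tau = \rho\sin\phi$, $t = \log\rho$, mapping $W$ onto the strip $\re \times (-\gamma, \gamma)$ with $\gamma = \arctan c$ and converting the quotient to $\int (w_t^2 + w_\phi^2) \, dt \, d\phi / \int w^2/\cos^2\phi \, dt \, d\phi$. Expanding $w$ in the Dirichlet eigenbasis of the weighted cross-sectional problem $-\Phi'' = \Lambda \Phi/\cos^2\phi$ on $(-\gamma, \gamma)$ shows $C_W = \Lambda_1(\gamma)$, and the substitution $\theta = \phi + \pi/2$ (so $\cos\phi = \sin\theta$) identifies $\Lambda_1(\gamma) = E(\pi/2 - \gamma)$, which strictly exceeds $\tfrac{1}{4}$ since $\pi/2 - \gamma \in (0, \pi/2)$. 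Equality in the eigenbasis bound forces $w(t,\phi) = \Phi_1(\phi)$ constant in $t$, but such $w$ has $\int w^2/\cos^2\phi \, dt \, d\phi = \infty$; hence $C_W$ is not attained. Finally, if $C_2(\Omega)$ were attained by some $u$, the zero extension of the corresponding $v$ to $W$ would realize $C_W$, a contradiction.

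The step I expect to be most delicate is the upper bound $C_2(\Omega) \le C_W$: one must verify carefully that rescaled wedge test functions genuinely land inside $\widetilde\Omega$, for which the elementary estimate $c(1-e^{-s}) \ge (1-\delta)cs$ on $0 \le s \le 2\delta$ is the crucial input. The non-attainment of $C_W$ itself is technically routine through Fourier expansion on the cross-section, but also demands careful bookkeeping of boundary conditions through the two coordinate changes; the qualitative ingredient that drives everything is the already-established strict inequality $E(a) > E(0) = \tfrac{1}{4}$ for $a \in (0, \pi/2)$.
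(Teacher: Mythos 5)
Your proposal is correct, but it reaches the conclusion by a genuinely different route than the paper. The paper also places a corner at a single contact point of $\pd B_1$, but it makes the aperture \emph{variable}: the half-angle $a(r)$ is tuned so that $E(a(r))\,g(r)=E(a)$ exactly, where $g(r)$ measures the worst-case discrepancy between the true weight $|x|^{-2}(\log\frac{1}{|x|})^{-2}$ and the model weight $y_2^{-2}$ on each arc. Applying the cross-sectional eigenvalue inequality fiber by fiber then produces the exact identity
\[
	\int_{\Omega} |\nabla u|^2\,dx \;\ge\; \int\!\!\int (\tilde{u}_r)^2\, r\,d\theta\,dr \;+\; E(a)\int_{\Omega}\frac{u^2}{|x|^2(\log|x|)^2}\,dx,
\]
so $C_2(\Omega)=E(a)>\frac14$ and any minimizer would have to satisfy $\tilde{u}_r\equiv 0$, which is impossible. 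Your argument instead fixes a simple linear aperture, uses the exact conformal (log-polar) reduction of both integrals to the quotient $\int(v_s^2+v_\tau^2)/\int v^2/s^2$ on a perturbed wedge, sandwiches $C_2(\Omega)$ between the exact wedge constant $C_W$ and the constants of slightly narrower wedges via scale invariance, computes $C_W=\Lambda_1(\gamma)=E(\pi/2-\arctan c)$ by separation of variables on the strip, and transfers non-attainment back by zero extension. Both proofs ultimately rest on the same two ingredients: $E(a)>E(0)=\frac14$ for $a\in(0,\pi/2)$ (the paper's appendix) and the mechanism that a minimizer would have to be independent of the radial (your $t$) variable, contradicting integrability. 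What your version buys is an explicit, natural domain and a transparent identification of $C_2(\Omega)$ with a wedge Hardy constant; what it costs is two extra steps you should make explicit: the continuity of $\Lambda_1(\gamma)$ (equivalently of $E$) in the aperture, needed to close the sandwich $C_{W_\delta}\to C_W$ as $\delta\to 0$ (the paper uses the same continuity of $E$, also without proof), and the verification that the zero extension of a putative minimizer on $\widetilde\Omega$ lies in $W_0^{1,2}(W)$ with the same quotient. Neither is a gap --- both are standard --- but they are the load-bearing joints of your argument, together with the elementary inequality $1-e^{-s}\ge (1-\delta)s$ for $0\le s\le 2\delta$ that you correctly isolate as the delicate point of the upper bound.
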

\begin{proof}
For $a \in (0,\pi/2)$, we define a cone
\[
	\mathbf{C}_a \equiv \{ (r\cos \theta, r\sin \theta) \in \re^2_+ \ | \ r \in (0,\infty), \theta \in (a,\pi-a)\} \subset \re^2_+.
\]
We define
\begin{align*}
	R(y_1,y_2) &\equiv \((y_1)^2 + (1-y_2)^2 \) \( \log \frac{1}{((y_1)^2 + (1 -y_2)^2)^{1/2}} \)^2 \\
	&= \frac{1}{4} h(r, \theta) \{ \log h(r,\theta) \}^2
\end{align*}
for $(y_1, y_2) = (r \cos \theta, r \sin \theta)$, where $h(r, \theta) = r^2 - 2 r \sin \theta + 1$.
Since \[ \log h(r, \theta) = h(r,\theta) - 1 - \frac{(h(r, \theta) - 1)^2}{2} + O(r^3) \textrm{  as }  r \to 0, \]
we have
\begin{align}
\label{asymp}
	\frac{R(y_1, y_2)}{(y_2)^2} &= \frac{(r^2 - 2r \sin \theta + 1) (4\sin^2 \theta - 4r \sin \theta (1 - 2\sin^2 \theta) + O(r^2))}{4 \sin^2 \theta} \notag \\
	&= \frac{4 \sin^2 \theta - 4r \sin \theta + O(r^2)}{4 \sin^2 \theta}
\end{align}
as $r \to 0$.
Thus we see that
\[
	\lim_{y_2 \to 0, (y_1,y_2) \in \mathbf{C}_a} R(y_1,y_2)/(y_2)^2 = 1
\]
for each $a > 0$.
From now on, we fix $a \in (\pi/4,\pi/2)$.
We define
\[
	g(r) \equiv  \inf \Big \{ \frac{R(y_1,y_2)}{(y_2)^2} \ \Big | \ (y_1,y_2) \in \mathbf{C}_a, \,  y_1^2+y_2^2 = r^2 \Big \}.
\]
By \eqref{asymp}, we see that $\lim_{r \to 0}g(r) = 1$.
Further, we see  that $g(r) < 1$ for small $r > 0$.
We take $r_0 \in (0,1/2)$ such that $g(r) < 1$ for any $r \in (0,r_0)$.
Note that $E(a)$ is monotone non-decreasing with respect to $a \in (0, \pi/2)$.
Now for each $r \in (0,r_0)$, we take $a(r) \in (a,\pi/2)$ such that
$E(a)/E(a(r)) = g(r) \in (0,1)$.
Since $\lim_{r \to 0}g(r) =1$, it follows that $\lim_{r \to 0} a(r) = a$.
Since $E$ is continuous on $(0,\pi/2)$ and $g$ on $(0,r_0)$,
$a(r)$ is continuous with respect to $r \in (0,r_0)$.
We define
\[
	\tilde{\Omega} \equiv  \{(r\cos\theta,r\sin\theta) \in \re^2_+ \ | \ r \in (0, r_0), \theta \in (a(r),\pi-a(r))\}
\]
and
\[
	\Omega = \{(x_1,x_2) \in B_1 \ | \ (x_1,1-x_2) \in \tilde{\Omega} \} \subset B_1 \subset \re^2.
\]
We claim that $C_2(\Omega) = E(a) >\frac14 $ and $C_2(\Omega)$ is not attained.

For any $u \in C_0^\infty(\Omega),$ we define $\tilde{u}(y_1,y_2) = u(y_1,1-y_2)$ for $y = (y_1, y_2) \in \tilde{\Omega}$.
Then, we see that $\tilde{u} \in C_0^\infty(\tilde{\Omega})$ and
\[
	\int_{\Omega} |\nabla u|^2 dx_1dx_2 = \int_{\tilde{\Omega}} |\nabla \tilde{u}|^2 dy_1dy_2
	= \int_0^{r_0} \int_{a(r)}^{\pi-a(r)} r(\tilde{u}_r)^2 + r^{-1}(\tilde{u}_\theta)^2 d\theta dr
\]
and
\[
	\int_{\Omega} \frac{(u(x_1,x_2))^2}{|x|^2(\log |x|)^2} dx_1dx_2 = \int_{\tilde{\Omega}} \frac{(\tilde{u}(y_1,y_2))^2}{ R(y_1,y_2) } dy_1dy_2.
\]

First of all, we claim that $C_2(\Omega) \le E(a)$.
To prove this, we note that for any $a^\prime \in (a,\pi/2)$, we can find $\delta^\prime \in (0,r_0)$ such that
\[
	\{(r\cos\theta,r\sin \theta) \in \tilde{\Omega} \ | \ r \in (0,\delta^\prime), \theta \in (a^\prime,\pi-a^\prime)\} \subset \tilde{\Omega}.
\]
For any small $\e,\delta > 0$ with  $4\e <  \delta < \delta^\prime$,
we find a Lipschitz continuous function $\psi_\e^\delta$ satisfying
$\psi_\e^\delta(r) = 0$ for $r \le \e$  or $r \ge \delta$, $\psi_\e^\delta(r) = 1$ for $2\e \le r \le \delta/2$,
$|(\psi_\e^\delta)^\prime(r)| = 1/\e$ for $r \in (\e,2\e)$, and $|(\psi_\e^\delta)^\prime(r)| = 2/\delta$ for $r \in (\delta/2, \delta)$.
We define that for $y=(y_1,y_2) = (r \cos \theta,r\sin\theta) \in \tilde{\Omega}$ and $x=(x_1,x_2) \in \Omega$,
 \[
\tilde{u}^{\delta}_{\e}(y_1,y_2)  = \tilde{u}^{\delta}_{\e}(r,\theta) = \psi_\e^\delta(r)\varphi_{a^\prime}(\theta) \textrm { and }
	u^{\delta}_{\e}(x_1,x_2) = \tilde{u}^{\delta}_{\e}(x_1,1-x_2).
\]
Then we see that
\begin{align*}
	&\int_{\Omega} |\nabla u^{\delta}_{\e}|^2 dx = \int_{\tilde{\Omega}} |\nabla\tilde{u}^{\delta}_{\e} |^2 dy
= \int_0^\infty \int_{a^\prime}^{\pi-a^\prime} r((\tilde{u}^{\delta}_{\e})_r)^2 + r^{-1}((\tilde{u}^{\delta}_{\e})_{\theta})^2 d\theta dr \\
	& = \( \int_\e^{2\e} ((\psi_{\e}^\delta)^{\prime}(r))^2 r dr + \int_{\delta/2}^{\delta} ((\psi_{\e}^\delta)^{\prime}(r))^2 r dr \) \int_{a^\prime}^{\pi-a^\prime}(\varphi_{a^\prime}(\theta))^2d\theta \\
	&\quad  + \int_\e^{\delta}\int_{a^\prime}^{\pi-a^\prime} r^{-1}(\psi_\e^\delta(r))^2\Big(\frac{d\varphi_{a^\prime}}{d\theta}\Big)^2 d\theta dr \\
	& = 3\int_{a^\prime}^{\pi-a^\prime}(\varphi_{a^\prime}(\theta))^2d\theta + \int_\e^{\delta} r^{-1}(\psi_\e^\delta(r))^2 dr\int_{a^\prime}^{\pi-a^\prime}\Big(\frac{d\varphi_{a^\prime}}{d\theta}\Big )^2 d\theta
\end{align*}
and
\[
	\int_{\Omega} \frac{(u^{\delta}_{\e}(x))^2}{|x|^2(\log |x|)^2} dx = \int_{\tilde{\Omega}}\frac{(\tilde{u}^{\delta}_{\e}(y))^2}{R(y_1,y_2)} dy = \int_\e^{\delta}\int_{a^\prime}^{\pi-a^\prime}\frac{(y_2)^2}{R(y_1,y_2)} r^{-1}(\psi_\e^\delta(r))^2 \Big(\frac{\varphi_{a^\prime}}{\sin \theta}\Big )^2 d\theta dr.
\]
Since $\lim_{\e \to 0}\int_\e^{\delta} r^{-1}(\psi^\delta_\e(r))^2 dr = \infty$ for each $\delta > 0,$
we see that
\[
	\lim_{\e \to 0} \frac{\int_{\Omega}|\nabla u^\delta_\e|^2 dx}{ \int_{\Omega}  \frac{|u^\delta_\e|^2}{|x|^2(\log |x|)^2} dx } \le
E(a^\prime) (\min_{r \in [0,\delta]}g(r))^{-1}.
\]
Then, $C_2(\Omega) \le E(a^\prime)$ for any $a^\prime \in (a, \pi/2)$ since $\lim_{r \to 0} g(r) = 1$.
This implies that $C_2(\Omega) \le E(a)$.

Now for any $v \in W_0^{1,2}(\Omega)$ with $\tilde{v}(y_1,y_2) \equiv v(y_1,1-y_2) \in W_0^{1,2}(\tilde{\Omega}),$
 we see that
\begin{align*}
	\int_{\Omega} |\nabla v|^2 dx_1dx_2
& \ge \int_0^{r_0} \int_{a(r)}^{\pi-a(r)} r(\tilde{v}_r)^2 + E(a(r))r^{-1}\frac{(\tilde{v})^2}{\sin^2 \theta} d\theta dr  \\
& = \int_0^{r_0} \int_{a(r)}^{\pi-a(r)} \Big [ (\tilde{v}_r)^2 + E(a(r))\frac{(\tilde{v})^2}{(y_2)^2}\Big] rd\theta dr \\
&= \int_0^{r_0} \int_{a(r)}^{\pi-a(r)} \Big [ (\tilde{v}_r)^2 + E(a(r)) \frac{R(y_1,y_2)}{(y_2)^2}\frac{(\tilde{v})^2}{R(y_1,y_2)} \Big ] rd\theta dr \\
&  \ge \int_0^{r_0} \int_{a(r)}^{\pi-a(r)} \Big [ (\tilde{v}_r)^2 + E(a(r)) g(r)\frac{(\tilde{v})^2}{R(y_1,y_2)} \Big ] rd\theta dr \\
& =   \int_0^{r_0} \int_{a(r)}^{\pi-a(r)} \Big [ (\tilde{v}_r)^2 + E(a) \frac{(\tilde{v})^2}{R(y_1,y_2)} \Big ] rd\theta dr \\
& =   \int_0^{r_0} \int_{a(r)}^{\pi-a(r)}  (\tilde{v}_r)^2 rd\theta dr + E(a)  \int_{\tilde{\Omega}} \frac{(\tilde{v})^2}{R(y_1,y_2)} dy_1dy_2\\
& =   \int_0^{r_0} \int_{a(r)}^{\pi-a(r)}  (\tilde{v}_r)^2 rd\theta dr + E(a)  \int_{\Omega} \frac{(v(x))^2}{|x|^2(\log |x|)^2} dx.
\end{align*}
This implies that $C_2(\Omega) \ge E(a).$
Combining above upper and lower estimates, we see that $C_2(\Omega) = E(a) > \frac14.$

From above estimate, we see that
 for any $u \in W_0^{1,2}(\Omega),$ we see that
\begin{equation} \label{ces} \int_{\Omega} |\nabla u|^2 dx_1dx_2   \ge    \int_0^{r_0} \int_{a(r)}^{\pi-a(r)}  (\tilde{u}_r)^2 rd\theta dr + E(a)  \int_{\Omega} \frac{(u(x_1,x_2))^2}{|x|^2(\log |x|)^2} dx_1dx_2.\end{equation}
If $C_2(\Omega)$ is attained by $u \in W_0^{1,2}(\Omega) \setminus \{0\},$
we see from \eqref{ces} that $\tilde{u}_r \equiv 0$ in $\tilde{\Omega}$.
This contradicts to the fact $u \in W_0^{1,2}(\Omega)$.
Thus we conclude that $C_2(\Omega)$ is not attained in $W_0^{1,2}(\Omega)$.
\end{proof}

$\Box$

%
%
\begin{remark}
For the domain $\Omega$ in Theorem \ref{theorem-nonexistence},
let $P, Q$ be two points in $\pd \Omega \cap \pd B_r$ when $r$ is close to $1$.
Then $m(r)$ is the length of the arc $\stackrel{\frown}{PQ}$, which is larger than the length of the segment $PQ$.
Thus it is easy to see that in this case, $m_0 = 0$ and
\[
	m_1 = \limsup_{r \to 1} m(r)/(1-r) \ge 2\cos a > 0;
\]
see Theorem \ref{theorem-existence}.
\end{remark}

\end{section}

\appendix
\section{Appendix}

Here, we prove
\[
	E \equiv \inf \Big \{ \frac{\int_{0}^{\pi}(\phi_\theta)^2 d\theta} {\int_{0}^{\pi} (\phi^2/ \sin^2 \theta) d\theta}
	\ \Big | \ \phi \in W^{1,2}_0(0,\pi) \setminus \{ 0 \} \Big \} = \frac {1}{4}
\]
and $E$ is not achieved.

\begin{proof}
For $u \in C_0^{\infty}((0,\pi))$, we compute
\begin{align*}
	&\left| \int_0^\pi \frac{u^2}{\sin^2 \theta} d\theta \right| = \left| \int_0^\pi \( -\frac{\cos \theta}{\sin \theta} \)^{\prime} u^2 d\theta \right|
	= \left| \int_0^\pi \( \frac{\cos \theta}{\sin \theta} \) 2u u^{\prime} d\theta \right| \\
	&\le 2 \(\int_0^\pi \frac{u^2}{\sin^2 \theta} d\theta \)^{\frac 12} \(\int_0^\pi (u^\prime)^2 \cos^2 \theta d\theta \)^{\frac 12}
	\le 2 \(\int_0^\pi \frac{u^2}{\sin^2 \theta} d\theta \)^{\frac 12} \(\int_0^\pi (u^\prime)^2 d\theta \)^{\frac 12}.
\end{align*}
Thus we have the inequality
\[
	\frac{1}{4} \int_0^\pi \frac{u^2}{\sin^2 \theta} d\theta \le \int_0^\pi (u^\prime)^2 d\theta.
\]
By density, this inequality holds for all $u \in W^{1,2}_0(0, \pi)$.

To see $E = 1/4$, test $E$ by functions $u_{\alpha}(\theta) = (\sin \theta)^{\alpha}$ for $\alpha > 1/2$.
Then we find
\[
	\frac{\int_{0}^{\pi}(u_{\alpha}^{\prime}(\theta))^2 d\theta}{\int_{0}^{\pi} (u_{\alpha}^2 / \sin^2 \theta) d\theta}
	= \alpha^2 - \frac{\int_{0}^{\pi} (\sin \theta)^{2\alpha-2} d\theta}{\int_{0}^{\pi} (u_{\alpha}^2 / \sin^2 \theta) d\theta}
	\le \alpha^2 \to 1/4, \quad \alpha \downarrow 1/2.
\]

To see that $E$ is not attained, we use the function $v(\theta) = u(\theta)/(\sin \theta)^{1/2}$ for $u \in W^{1,2}_0(0,\pi)$.
Then a simple computation shows that
\[
	(u^{\prime})^2 - \frac{1}{4} \frac{u^2}{\sin^2 \theta} = -\frac{u^2}{4} + (v^{\prime})^2 \sin \theta + \(\frac{v^2}{2}\)^{\prime} \cos \theta.
\]
Integrating this on $[0, \pi]$, and noting that $\int_0^\pi (v^2/2)^{\prime} \cos \theta d\theta = \int_0^\pi (u^2/2) d\theta$ by integration by parts,
we obtain
\[
	\int_0^\pi \left[ (u^{\prime})^2 - \frac{1}{4} \frac{u^2}{\sin^2 \theta} \right] d\theta = \int_0^\pi \frac{u^2}{4} d\theta  + \int_0^\pi (v^{\prime})^2 \sin \theta d\theta.
\]
This implies that if $E$ is attained, then $u \equiv 0$ on $[0, \pi]$.
\end{proof}

%
%

\vspace{1em}\noindent
{\bf Acknowledgments.}

This research of the first author(J.B.) was supported by
Mid-career Researcher Program through the National Research Foundation
of Korea funded by the Ministry of Science, ICT and Future Planning
(NRF-2017R1A2B4007816).
The second author (F.T.) was supported by JSPS Grant-in-Aid for Scientific Research (B), No.15H03631.


\begin{thebibliography}{99}


\bibitem{ACR}
Adimurthi, N. Chaudhuri and M. Ramaswamy:
{\em An improved Hardy-Sobolev inequality and its application},
\newblock Proc. Amer. Math. Soc. {\bf 130}, (2002), No. 2, 489--505.




%

\bibitem{Ancona}
A. Ancona:
{\it On strong barriers and an inequality of Hardy for domains in $\mathbb{R}^n$},
\newblock J. London Math. Soc. (2) {\bf 34}, (1986), 274--290.

\bibitem{BEL(book)}
A. A. Balinsky, W. D. Evans, and R. T. Lewis:
{\it The analysis and geometry of Hardy's inequality},
\newblock Universitext. Springer, Cham, 2015. xv+263 pp.

\bibitem{BFT1}
G. Barbatis, S. Filippas, and A. Tertikas,
{\it A unified approach to improved $L^p$ Hardy inequalities with best constants},
\newblock Trans. Amer. Math. Soc. {\bf 356}, (2004), no. 6, 2169-2196.

\bibitem{BFT2}
G. Barbatis, S. Filippas, and A. Tertikas,
{\it  Series expansion for $L^p$  Hardy inequalities},
\newblock Indiana Univ. Math. J.  {\bf 52}, (2003),  no. 1, 171-190.


\bibitem{Brezis-Marcus}
H. Brezis, and M. Marcus:
{\it Hardy's inequalities revisited,}
\newblock Ann. Scuola Norm. Sup. Pisa Cl. Sci. (4), {\bf 25}, (1997), no.1-2, 217--237.


\bibitem{Brezis-Vazquez}
H. Brezis, and J. L. V\'{a}zquez:
{\it Blow-up solutions of some nonlinear elliptic problems},
\newblock Rev. Mat. Univ. Complut. Madrid., {\bf 10}, (1997), No. 2, 443--469.


\bibitem{Cazacu}
C. Cazacu:
{\it New estimates for the Hardy constants of multipolar Schr\"odinger operators},
\newblock Commun. Contemp. Math. {\bf 18}, (2016), 28 pages.




\bibitem{Davies}
E. B.  Davies:
{\em The Hardy constant},
\newblock{Quart. J. Math. Oxford Ser. (2)},
{\bf 46} (1995), no. 184, 417--431.

\bibitem{DFP}
B. Devyver, M. Fraas, and Y. Pinchover:
{\em Optimal Hardy weight for second-order elliptic operator: an answer to a problem of Agmon},
\newblock{J. Funct. Anal.},
{\bf 266} (2014), 4422--4489.

\bibitem{DP}
B. Devyver, and Y. Pinchover:
{\em Optimal $L^p$ Hardy-type inequalities},
\newblock{Ann. Inst. H. Poincar\'e. Anal. Non Lineaire},
{\bf 33} (2016), 93--118.

\bibitem{DPP}
B. Devyver, Y. Pinchover, and G. Psaradakis:
{\em Optimal Hardy inequalities in cones},
\newblock{Proc. Roy. Soc. Edinburgh Sect. A.},
{\bf 147} (2017), 89--124.

%


\bibitem{Filippas-Tertikas}
S. Filippas, and A. Tertikas:
{\em Optimizing improved Hardy inequalities},
\newblock J. Funct. Anal. {\bf 192} (2002), 186--233.
{\em Corrigendum},
\newblock J. Funct. Anal. {\bf 255} (2008), No. 8, 2095.





\bibitem{GGM}
F. Gazzola, H. C. Grunau, and E. Mitidieri:
{\em Hardy inequalities with optimal constants and remainder terms,}
\newblock Trans. Amer. Math. Soc., {\bf 356} (2003), no.6, 2149--2168.



\bibitem{Ghoussoub-Moradifam(book)}
N. Ghoussoub, and A. Moradifam:
{\em Functional inequalities: new perspectives and new applications,}
\newblock Mathematical Surveys and Monographs, 187.
American Mathematical Society, Providence, RI, 2013. xxiv+299.




\bibitem{Ioku-Ishiwata}
N. Ioku, and M. Ishiwata:
{\em A scale invariant form of a critical Hardy inequality,}
\newblock Int. Math. Res. Not. IMRN,  {\bf 2015}(2015), no. 18, 8830--8846.

\bibitem{Kawohl}
B. Kawohl:
{\em Rearrangements and convexity of level sets in PDE,}
\newblock Lecture Notes in Mathematics, 1150. Springer-Verlag, Berlin, 1985. iv+136 pp.

\bibitem{Kawohl-Fridman}
B. Kawohl, and V. Fridman:
{\em Isoperimetric estimates for the first eigenvalue of the $p$-Laplace operator and the Cheeger constant,}
\newblock Comment. Math. Univ. Carolin., {\bf 44} (2003), no.4, 659--667.

\bibitem{LP}
P. D. Lamberti, and Y. Pinchover:
{\em $L^p$ Hardy inequality on $C^{1,\gamma}$ domains},
\newblock arXiv: 1611.00563 (2016)


\bibitem{Ladyzhenskaya}
O. A. Ladyzhenskaya:
{\em The mathematical theory of viscous incompressible flow, Second edition, revised and enlarged,}
\newblock Mathematics and its Applications, Vol. 2 Gordon and Breach, Science Publishers, New York-London-Peris, (1969).

\bibitem{Lehrback}
J. Lehrb\"ack:
{\em  Weighted Hardy inequalities beyond Lipschitz domains},
\newblock Proc. Amer. Math. Soc., {\bf 142},  (2014),  no. 5, 1705--1715.


\bibitem{Leray}
J. Leray:
{\it \'Etude de diverses \'equations int\'egrales non lin\'eaires et de quelques probl\`emes que pose l'hydrodynamique},
\newblock Thesis, (French) 1933. 82 pp


\bibitem{Lieb}
E. H. Lieb:
{\em On the lowest eigenvalue of the Laplacian for the intersection of two domains,}
\newblock Invent. Math. {\bf 74}  (1983),  no. 3, 441--448.

\bibitem{MMP}
M. Marcus, V. J. Mizel, and Y. Pinchover:
{\em On the best constant for Hardy's inequality in $\mathbb {R}^n$,}
\newblock Trans. Amer. Math. Soc. {\bf 350} (1998), 3237--3255.

\bibitem{Marcus-Shafrir}
M. Marcus, and I. Shafrir:
{\em An eigenvalue problem related to Hardy's $L^p$ inequality},
\newblock Ann. Scuola Norm. Sup. Pisa Cl. Sci. (4) {\bf 29} (2000), 581--604.

\bibitem{MS(NA)}
T. Matskewich, and P. E. Sobolevskii:
{\em  The best possible constant in generalized Hardy's inequality for convex domain in $\mathbb{R}^N$},
\newblock Nonlinear Anal. {\bf 28},  (1997),  no. 9, 1601--1610.

\bibitem{Mazya}
V. Maz'ya:
{\em Sobolev spaces with applications to elliptic partial differential equations. Second, revised and augmented edition,}
\newblock Grundlehren der Mathematischen Wissenschaften, {\bf 342}.
Springer, Heidelberg, 2011. xxviii+866 pp.



\bibitem{Opic-Kufner}
B. Opic, and A. Kufner:
{\em Hardy-type inequalities},
\newblock Pitman Res. Notes Math. Ser., {\bf 219},
Longman Sci. Tech., Harlow, 1990.


\bibitem{Saloff-Coste}
L. Saloff-Coste:
{\em Aspects of Sobolev-type inequalities},
\newblock London Mathematical Society Lecture Note Series, {\bf 289}.
Cambridge University Press, Cambridge, 2002. x+190 pp.


\bibitem{Sano-TF}
M. Sano, and F. Takahashi:
{\em Scale invariance structures of the critical and the subcritical Hardy inequalities and their improvements},
\newblock Calc. Var. Partial Differential Equations {\bf 56}  (2017),
 no. 3, 56--69


\bibitem{TF}
F. Takahashi:
{\em A simple proof of Hardy's inequality in a limiting case},
\newblock Arch. Math., {\bf 104}(2015), 77-82.



\bibitem{Tidblom(JFA)}
J. Tidblom:
{\em A Hardy inequality in the half-space},
\newblock J. Funct. Anal. {\bf 221} (2005),  no. 2, 482--495.

\bibitem{Tidblom(PAMS)}
J. Tidblom:
{\em A geometrical version of Hardy's inequality for $\stackrel{\circ}{W}^{1,p}(\Omega)$},
\newblock Proc. Amer. Math. Soc.  132  (2004),  no. 8, 2265--2271.


\end{thebibliography}
\end{document}